\documentclass[copyright,creativecommons]{eptcs}

\providecommand{\event}{ACT 2025} %

\usepackage{iftex}

\ifpdf
  \usepackage{underscore}         %
  \usepackage[T1]{fontenc}        %
\else
  \usepackage{breakurl}           %
\fi

\usepackage[utf8]{inputenc}
\usepackage[english]{babel}
\usepackage[font=small,labelfont=bf]{caption}

\usepackage{caption}
\usepackage{paralist} %
\usepackage{csquotes}

\usepackage{amsmath}
\usepackage{amssymb}
\usepackage{amsthm}

\usepackage{mathtools}
\usepackage{IEEEtrantools}

\usepackage{eucal}
\usepackage{bbm}
\usepackage{stmaryrd} %
\usepackage{bbm} %

\usepackage{trimclip}

\usepackage{scalerel}

\usepackage[dvipsnames]{xcolor}
\definecolor{darkgreen}{RGB}{35, 89, 52}
\definecolor{paleorange}{RGB}{255, 236, 207}
\definecolor{paleyellow}{RGB}{255, 252, 217}
\definecolor{paleteal}{RGB}{223, 245, 243}
\definecolor{palered}{RGB}{255, 234, 232}
\definecolor{palepurple}{RGB}{238, 219, 255}
\definecolor{fungreen}{RGB}{0,137,40}
\definecolor{resred}{RGB}{198,42,8}

\usepackage{graphicx}
\graphicspath{ {./images/} }

\usepackage{hyperref}
\hypersetup{
	colorlinks=true,
	linkcolor=darkgreen,
	filecolor=magenta,      
	urlcolor=MidnightBlue,
	citecolor=darkgreen,
	pdftitle={} %
}

\usepackage{quiver} %

\usepackage{enumitem} %

\usepackage[capitalize, noabbrev]{cleveref} %

\newtheorem{counter}{Definition}[section]
\newtheorem{definition}[counter]{Definition}
\newtheorem{proposition}[counter]{Proposition}
\newtheorem{theorem}[counter]{Theorem}

\newtheorem{example}[counter]{Example}
\newtheorem{remark}[counter]{Remark}

\newcommand{\Reals}{\mathbb{R}}
\newcommand{\PosReals}{\Reals_{\geq 0}}

\newcommand{\then}{\fatsemi}

\newcommand{\Ob}{\mathsf{Ob}}

\newcommand{\cat}[1]{\mathcal{#1}}
\newcommand{\functor}[1]{\mathsf{#1}}
\newcommand{\twocat}[2]{{\mathbb{#1}\mathsf{#2}}}

\newcommand{\C}{\cat{C}}
\newcommand{\D}{\cat{D}}
\newcommand{\E}{\cat{E}}
\newcommand{\F}{\cat{F}}

\newcommand{\R}{\cat{R}}

\newcommand{\U}{\cat{U}}
\newcommand{\V}{\cat{V}}
\newcommand{\W}{\cat{W}}

\newcommand{\Kl}{\mathsf{Kl}} %
\newcommand{\Para}{\mathbf{Para}}

\newcommand{\Bool}{\mathsf{Bool}}
\newcommand{\Pos}{\mathsf{Pos}}

\newcommand{\Set}{\mathsf{Set}}
\newcommand{\FinSet}{\mathsf{FinSet}}
\newcommand{\Top}{\mathsf{Top}}

\newcommand{\smcat}{\mathsf{cat}} %
\newcommand{\Meas}{\mathsf{Meas}}
\newcommand{\DP}{\mathsf{DP}}
\newcommand{\PosDP}{\mathsf{DP}_\leq}

\newcommand{\Stoch}{\mathsf{Stoch}}

\newcommand{\Petri}{\mathsf{Petri}}
\newcommand{\Csp}{\mathbf{Csp}}

\newcommand{\copyMor}{\mathsf{cp}}
\newcommand{\delMor}{\mathsf{del}}

\newcommand{\SigAlg}{\Sigma}
\newcommand{\interval}[2]{[#1, #2]}

\newcommand{\prodSmcat}{\times}
\newcommand{\prodW}{\otimes}

\newcommand{\powerset}{\functor{Pow}}

\newcommand{\ParaU}[3]{(#1 / #2 #3)}

\newcommand{\PowNoEmpty}{\functor{Pow_\varnothing}}
\newcommand{\Dist}{\functor{D}} %
\newcommand{\Arr}{\functor{Arr}}
\newcommand{\TwiArr}{\functor{TwiArr}}
\newcommand{\leftFunctor}{\functor{L}}

\newcommand{\CCat}{\twocat{C}{at}} %
\newcommand{\MMoncat}{\twocat{M}{oncat}} %
\newcommand{\TTwocat}{\mathbbm{2}\mathsf{cat}} %
\newcommand{\VCat}{\twocat{V}{cat}}
\newcommand{\WCat}{\twocat{W}{cat}}

\newcommand{\CC}{\twocat{C}{}}

\makeatletter
\newcommand{\oset}[3][0ex]{%
  \mathrel{\mathop{#3}\limits^{
    \vbox to#1{\kern-2.2\ex@
    \hbox{$\scriptstyle#2$}\vss}}}}
\makeatother
\newcommand{\vertthen}{{\oset{\diamond}{\text{{\clipbox{0 0 0 3.4pt}{$\fatsemi$}}}}}}
\newcommand{\horthen}{{\oset{\star}{\text{{\clipbox{0 0 0 3.4pt}{$\fatsemi$}}}}}}

\newcommand{\op}[1]{#1^{\text{op}}}
\newcommand{\id}{\mathsf{id}}
\newcommand{\Id}{\mathsf{Id}}

\newcommand{\monadM}{\functor{M}}
\newcommand{\monadUnit}{\eta}
\newcommand{\monadMul}{\mu}
\newcommand{\monadTup}{(\monadM, \monadMul, \monadUnit)}
\newcommand{\symMonadMor}{\nabla}
\newcommand{\symMonadMorOf}[2]{\symMonadMor_{#1,#2}}

\newcommand{\commacat}[2]{(#1 \downarrow #2)} %

\newcommand{\sliceWith}[1]{/#1}
\newcommand{\sliceFunctor}{\sliceWith{(-)}}

\newcommand{\unitObj}{1}
\newcommand{\unitOf}[1]{\unitObj_#1}
\newcommand{\unitW}{I_\W}
\newcommand{\idUnitW}{\id_{\unitW}}
\newcommand{\unitCat}{\unitOf{{\smcat}}}

\newcommand{\poset}[1]{\cat{#1}}
\newcommand{\posetP}{\poset{P}}
\newcommand{\posetQ}{\poset{Q}}
\newcommand{\posetR}{\poset{R}}
\newcommand{\posetF}{\poset{F}}
\newcommand{\posetleq}{\preceq}

\newcommand{\designProblem}{d}

\newcommand{\elementTo}{\mapsto}
\newcommand{\functorArr}{\to}

\newcommand{\morphism}[1]{#1}
\newcommand{\ChassisDP}{\morphism{C}}
\newcommand{\BatteryDP}{\morphism{B}}
\newcommand{\monfunPhi}{\varphi}
\newcommand{\paramTheta}{\theta}

\newcommand{\True}{\top}
\newcommand{\False}{\bot}

\newcommand{\funsty}[1]{\textcolor{fungreen}{#1}}
\newcommand{\ressty}[1]{\textcolor{resred}{#1}}

\newcommand{\FixFunMinRes}{\emph{Fix\funsty{Fun}Min\ressty{Res}}}

\title{Composable Uncertainty in Symmetric Monoidal Categories for Design Problems}

\def\titlerunning{Composable Uncertainty in Symmetric Monoidal Categories for Design Problems}

\def\authorrunning{M. Furter, Y. Huang, G. Zardini}
\author{
Marius Furter\thanks{Lead author}
\institute{University of Zurich \\ Zurich, Switzerland}
\email{marius.furter@math.uzh.ch}
\and
Yujun Huang
\institute{Massachusetts Institute of Technology \\ Cambridge, MA, USA}
\email{yujun233@mit.edu}
\and
Gioele Zardini
\institute{Massachusetts Institute of Technology \\ Cambridge, MA, USA}
\email{gzardini@mit.edu}
}

\begin{document}
\maketitle

\begin{abstract}
Applied category theory often studies symmetric monoidal categories (SMCs) whose morphisms represent open systems.
These structures naturally accommodate complex wiring patterns, leveraging (co)monoidal structures for splitting and merging wires, or compact closed structures for feedback. 
A key example is the compact closed SMC of design problems ($\DP$), which enables a compositional approach to co-design in engineering.
However, in practice, the systems of interest may not be fully known.
Recently, Markov categories have emerged as a powerful framework for modeling uncertain processes. 
In this work, we demonstrate how to integrate this perspective into the study of open systems while preserving consistency with the underlying SMC structure.
To this end, we employ the change-of-base construction for enriched categories, replacing the morphisms of a symmetric monoidal $\V$-category $\C$ with parametric maps $A \to \C(X,Y)$ in a Markov category induced by a symmetric monoidal monad. 
This results in a symmetric monoidal 2-category $N_*\C$ with the same objects as $\C$ and reparametrization 2-cells. 
By choosing different monads, we capture various types of uncertainty. 
The category underlying $\C$ embeds into $N_*\C$ via a strict symmetric monoidal functor, allowing (co)monoidal and compact closed structures to be transferred. 
Applied to $\DP$, this construction leads to categories of practical relevance, such as parametrized design problems for optimization, and parametrized distributions of design problems for decision theory and Bayesian learning.
\end{abstract}

\maketitle

\section{Introduction}
The design of complex systems is a critical but challenging task for society.
Difficulties arise from interactions between heterogeneous components, competing design objectives, diverse stakeholders with varied interests, and computationally intensive models.
Co-design tackles these challenges by leveraging a categorical model of design problems called $\DP$ \cite{zardiniCoDesignComplexSystems2023}.
$\DP$ is a compact closed symmetric monoidal category (SMC), whose objects are partially ordered sets (posets), and whose morphisms are monotone maps from $\op{\posetF} \times \posetR$ to the poset of truth values $\Bool = \{ \False \leq \True \}$.
It models heterogeneous systems as processes providing \emph{functionalities} $f \in \posetF$ in return for \emph{resources} $r \in \posetR$.
Monotonicity captures the intuition that, if a resource $r$ suffices to provide functionality $f$, then it also suffices for any worse functionality $f' \leq f$. Moreover, any better resource $r' \geq r$ should also suffice to provide $f$.
Posets present transparent trade-offs between design objectives and provide a common ground for stakeholder discussions.
Moreover, $\DP$ enables functorial \emph{queries} which decompose complex design problems into simpler ones that can be solved efficiently.

Current co-design methods address uncertainty using upper and lower bounds \cite{censiUncertainty2017}.
This approach is robust and well-suited for safety-critical applications, ensuring system performance even in worst-case scenarios.
Furthermore, it is effective for approximating and solving continuous co-design via discretization.
However, this method is unable to provide quantitative measures of uncertainty, such as the probability that a system will meet functional requirements given specific resource constraints.
Understanding such trade-off between functionality, resource usage, and success probability is crucial in many cases.

Furthermore, design problems often involve \emph{parametric} uncertainties linked to design choices.
For instance, when designing soft-robot manipulators, the choice of material introduces the elastic modulus as a parameter that follows a distribution.
Moreover, its optimal value differs between components: a lower elastic modulus benefits hardware design by lowering driving force requirements, while a higher modulus benefits controller design by improving resistance to disturbances.
Such examples highlight the need to incorporate quantitative uncertainty into co-design in a way that depends on design parameters.

We propose a general scheme for adding parametrized uncertainty to any SMC.
Importantly, this process preserves compact closed and (co)monoidal structure.
Our approach supports any uncertainty semantics modeled by symmetric monoidal monads. 
This includes intervals, subsets and distributions, whose Kleisli categories form Markov categories.
The parametrization enables decision-making and learning for any process represented by a diagram in the SMC.

In more detail, given a $\V$-category $\C$, we replace its arrows $\C(X,Y)$ with maps $A \to \monadM \C(X,Y)$, where $\monadM \colon \V \to \V$ is a symmetric monoidal monad. The strength $\symMonadMor \colon \monadM(-) \otimes \monadM(=) \to \monadM(- \otimes =)$ allows us to compose $f \colon A \to \monadM \C(X,Y)$ and $g \colon B \to \monadM \C(Y,Z)$ according to
\[
A \otimes B \overset{f \otimes g}{\longrightarrow}  \monadM \C(X,Y) \otimes \monadM \C(Y,Z) \overset{\symMonadMor}{\longrightarrow} \monadM( \C(X,Y) \otimes \C(Y,Z) ) \overset{\monadM \then_\C}{\longrightarrow} \monadM \C(X,Z).
\]
In addition, any $\V$-arrow $\varphi \colon A' \to \monadM A$ allows us to reparametrize $f$ to $A' \to \monadM \C(X,Y)$ using Kleisli composition, providing 2-categorical structure. 
Furthermore, when $\C$ is monoidal, we can define a monoidal product analogous to composition. Hence, our construction promises to be a monoidal 2-category.

We prove this using change-of-base: Given a monoidal functor $N \colon \V \to \W$, we can convert a $\V$-category $\C$ into a $\W$-category $N_*\C$ by replacing its arrows by $N\C(X,Y)$. 
Moreover, when $N$ is symmetric and $\C$ (symmetric) monoidal, then so is $N_* C$. 
Finally, there is an inclusion of $\C_0 \to (N_* \C)_0$ of underlying categories that strictly preserves the monoidal product and coherence maps.

We view the proposed construction as a change-of-base along $\leftFunctor_\monadM \colon \V \to\Kl_\monadM $ and $\Kl_\monadM \sliceFunctor \colon \Kl_\monadM \to \smcat$, where $\leftFunctor_\monadM$ is the identity-on-objects functor sending $f \mapsto f \then \monadUnit$, and $\Kl_\monadM \sliceFunctor$ maps $X$ to the slice category $\Kl_\monadM \sliceWith{X}$. 
However, $\Kl_\monadM \sliceWith{X}$ is only symmetric monoidal up to invertible 2-cells. Nonetheless, when $\Kl_\monadM$ is strict (as can be assumed by strictification), change-of-base yields a (symmetric) monoidal 2-category where interchange (and naturality of symmetry) hold up to coherent reparametrization isomorphisms.

Applied to design problems, this construction leads to categories of practical relevance. 
We demonstrate that $\DP$ may be viewed as enriched in sets, posets, and measurable spaces. 
Hence, we can choose monads in any of these setting to get parametrized subsets, intervals, and distributions of design problems. 
In all cases, the compact closed SMC structure transfers to the resulting 2-category.

The remainder of the paper is structured as follows. \Cref{sec:background} introduces the required background on enriched categories and change-of-base.
\Cref{sec:parametric-uncertainty} describes uncertainty monads and the partial slice functor. 
It concludes with an explicit description of the proposed scheme in \cref{sec:main-construction}.
Finally, \cref{sec:learning-dps} describes categories of uncertain DPs and demonstrates their usefulness. 
An extended version of this paper including full definitions and detailed proofs is available at \cite{furterComposable2025Extended}.
\paragraph{Acknowledgments.} MF thanks the Digital Society Initiative for funding his Ph.D., the Rudge (1948) and Nancy Allen Chair for his visit to LIDS at MIT, and SNF Grant 200021\_227719 for travel costs.

\section{Background}\label{sec:background}
This section introduces enriched categories and the change-of-base construction. 
Moreover, it explains that change-of-base is 2-functorial and induces a transfer of symmetric monoidal structure. 
The latter result originates from Geoff Cruttwell's Ph.D.~thesis~\cite{cruttwellNormed2008}, which provides an exceptionally clear exposition.
We assume familiarity with SMCs \cite{macLaneCategories1998}, which we usually denote $(\V, \otimes, I)$ with coherence maps $\alpha, \rho, \lambda$, and $\sigma$. Many SMCs we use are Cartesian, including $\Set$ (sets and functions), $\Pos$ (posets and monotone functions), $\Top$ (topological spaces and continuous functions), $\Meas$ (measurable spaces and measurable functions), and $\smcat$ (small categories and functors). Our 2-categories will always be \emph{strict} and we use $f \then g := g \circ f$ for composition in diagrammatic order.

\begin{example}[Design problems]
    Let $\Bool := \{ \bot \leq \top\}$ be the poset of truth values. Given posets $\F$ and $\R$, a \emph{feasibility relation} $\Phi \colon \F \to \R$ is a monotone map $\op{\F} \times \R \to \Bool$. Posets and feasibility relations assemble into a compact closed SMC $\DP$ of \emph{design problems} \cite{fongInvitation2019, zardiniCoDesignComplexSystems2023} under 
    \[
        (\Phi \then \Psi)(f,q) := \bigvee_{r \in \R} \Phi(f,r) \wedge \Psi(r,q),
        \qquad \quad
        (\Phi_1 \otimes \Phi_2)[(f_1,f_2),(r_1,r_2)] := \Phi_1(f_1,r_1) \wedge \Phi_2(f_2,r_1),
    \]
    where we define $\posetF_1 \otimes \posetF_2 := \posetF_1 \times \posetF_2$ to be the Cartesian product of posets.
\end{example}

We recall monoidal functors due to their key role in the change-of-base construction.

\begin{definition}[Monoidal functor]
    Let $(\V,\otimes,I)$ and $(\W,\bullet, J)$ be monoidal categories. A \emph{(lax) monoidal functor} $N \colon \V \to \W$ is a functor from $\V$ to $\W$, along with natural transformations whose components 
    \[
        N_\epsilon \colon J \to NI, \qquad \qquad
        \widetilde N_{A,B} \colon NA \bullet NB \to N(A \otimes B)
    \]
    satisfy associativity and unitality conditions.
    If $N_\epsilon$ and $\widetilde N$ are isomorphisms, we call $N$ \emph{strong}; if they are identities, we call $N$ \emph{strict}. If $\V$ and $\W$ are symmetric, then $N$ is called \emph{symmetric} if it preserves $\sigma$ in the sense of $\widetilde N_{A,B} \then N(\sigma_\V) = \sigma_\W \then \widetilde N_{B,A}$.
\end{definition}
Just as small categories, functors, and natural transformations assemble into a 2-category $\CCat$, their monoidal counterparts also form a 2-category called $\MMoncat$.

\subsection{Enriched categories}
Enriched categories \cite{kellyBasic1982} replace hom-sets by more general hom-objects that inhabit a monoidal category $\V$. 
We will always assume $\V$ to be symmetric so that the monoidal product of $\V$-categories is defined.

\begin{definition}[$\V$-category]
    Given an SMC $(\V, \otimes, I)$, a \emph{$\V$-category} $\C$ consists of
    \begin{compactitem}
        \item[(i)] a set of \emph{objects} $\Ob(\C)$, and for all $A,B,C \in \Ob(\C)$,
        \item[(ii)] a $\V$-object of \emph{arrows} $\C(A,B) \in \Ob(\V)$,
        \item[(iii)] \emph{composition} arrows $\then_{A,B,C} \colon \C(A,B) \otimes \C(B,C) \to \C(A,C),$
        \item[(iv)] and \emph{identity} arrows $\id_A \colon I \to \C(A,A)$,
    \end{compactitem}
    that satisfy the following unitality and associativity conditions:
    \[
    \begin{tikzcd}[cramped]
    	{(\C(A,B) \otimes \C(B,C)) \otimes \C(C,D)} & {\C(A,C) \otimes \C(C,D)} \\
    	{\C(A,B) \otimes (\C(B,C) \otimes \C(C,D))} \\
    	{\C(A,B) \otimes \C(B,D)} & {\C(A,D)}
    	\arrow["{\then \otimes 1}", from=1-1, to=1-2]
    	\arrow["\alpha"', from=1-1, to=2-1]
    	\arrow["\then", from=1-2, to=3-2]
    	\arrow["{1 \otimes \then}"', from=2-1, to=3-1]
    	\arrow["\then"', from=3-1, to=3-2]
    \end{tikzcd}
    \qquad\qquad
    \begin{tikzcd}[cramped, sep = scriptsize]
    	{I \otimes \C(A,B)} \\
    	{\C(A,A) \otimes \C(A,B)} & {\C(A,B)} \\
    	{\C(A,B) \otimes I} \\
    	{\C(A,B) \otimes \C(B,B)} & {\C(A,B)}
    	\arrow["{\id_A \otimes 1}"', from=1-1, to=2-1]
    	\arrow["\lambda", from=1-1, to=2-2]
    	\arrow["\then"', from=2-1, to=2-2]
    	\arrow["{1 \otimes \id_B}"', from=3-1, to=4-1]
    	\arrow["\rho", from=3-1, to=4-2]
    	\arrow["\then"', from=4-1, to=4-2]
    \end{tikzcd}
\]
\end{definition}

\begin{definition}[$\V$-functor]
    Let $\C$ and $\D$ be $\V$-categories. A \emph{$\V$-functor} $F \colon \C \to \D$ consists of a function $F_\Ob \colon \Ob(\C) \to \Ob(\D)$, along with $\V$-arrows $F_{A,B} \colon \C(A,B) \to \D(F_\Ob (A),F_\Ob(B) )$, indexed by objects $A,B \in \Ob(\C)$. These data must preserve composition and identities:
    \[\begin{tikzcd}[cramped, row sep = scriptsize]
    	{\C(A,B) \otimes \C(B,C)} && {\D(FA,FB) \otimes \D(FB,FC)} \\
    	{\C(A,C)} && {\D(FA,FC)}
    	\arrow["{F_{A,B} \otimes F_{B,C}}", from=1-1, to=1-3]
    	\arrow["\then"', from=1-1, to=2-1]
    	\arrow["\then", from=1-3, to=2-3]
    	\arrow["{F_{A,C}}", from=2-1, to=2-3]
    \end{tikzcd}
    \qquad \qquad
    \begin{tikzcd}[cramped, row sep = scriptsize]
    	I \\
    	{\C(A,A)} & {\D(FA,FA)}
    	\arrow["{\id_A}"', from=1-1, to=2-1]
    	\arrow["{\id_{FA}}", from=1-1, to=2-2]
    	\arrow["{F_{A,A}}", from=2-1, to=2-2]
    \end{tikzcd}\]
    Given $\V$-functors $F \colon \C \to \D$ and $G \colon \D \to \E$, we define their \emph{composite} $F \then G \colon \C \to \E$ on objects as $(F \then G)_\Ob := F_\Ob \then G_\Ob$, and on arrows as $(F \then G)_{A,B} := F_{A,B} \then G_{FA,FB}$. 
\end{definition}

\begin{definition}[$\V$-natural transformation]
    Let $F,G : \C \to \D$ be $\V$-functors. A \emph{$\V$-natural transformation} $\tau: F \Rightarrow G$ is a family of $\V$-arrows $\tau_A: I \to \D(FA,GA)$ that satisfy a $\V$-naturality condition.
    They compose both vertically and horizontally.
\end{definition}
As is the case for ordinary categories, $\V$-categories, $\V$-functors, and $\V$-natural transformations assemble into a 2-category $\VCat$ \cite[Thm.~10.2]{eilenbergClosed1966}.

\begin{example}
     $\Set$-categories are locally small categories. Similarly, $\Set$-functors and $\Set$-natural transformations can be identified with their ordinary counterparts.
\end{example}

\begin{example}
    A $\smcat$-enriched category $\twocat{C}{}$ is a locally small 2-category. For any objects $A,B$, we have a hom-category $\C(A,B)$. We call the objects of $\twocat{C}{}$ \emph{0-cells}, the objects of $\C(A,B)$ \emph{1-cells}, and the arrows of $\C(A,B)$ \emph{2-cells}. There are two directions of composition: Composition in $\C(A,B)$, denoted by $\vertthen$, will be called \emph{vertical}, while the composition functor, denoted $\horthen \colon \C(A,B) \times \C(B,C) \to \C(A,C)$, will be called \emph{horizontal composition}. For more details, see \cite[Chapter 2.3]{johnson2Dimensional2020}.
\end{example}

\begin{example}
    $\smcat$-functors are 2-functors. Explicitly, a 2-functor $F \colon \twocat{C}{} \to \twocat{D}{}$ maps 0-cells with $F_\Ob$, and 1- and 2-cells with functors $F_{A,B} \colon \twocat{C}{}(A,B) \to \twocat{D}{}(FA,FB)$ in a way that strictly preserves horizontal composition and identities. Hence, $F$ preserves both vertical and horizontal compositional structures.
\end{example}

\begin{example}\label{ex:DP-pos-enriched}
    We can view $\DP$ as enriched in $\Pos$: For posets $\F,\R$, the set of feasibility relations $\DP(\F,\R)$ has a natural point-wise ordering. Since $\wedge$ and $\vee$ are monotone operations, composition of feasibility relations is monotone with respect to this ordering. Associativity and unitality mean the same in $\Pos$ as they do in $\Set$. $\Pos$-functors are ordinary functors whose on-arrows maps are monotone.
\end{example}

\begin{definition}[Category underlying a $\V$-category]
    We write $(-)_0 \colon \VCat \to \CCat$ for the representable 2-functor $\VCat(I_\VCat,-)$, where $I_\VCat$ is the $\V$-category with a single object $*$ and arrows $I_\VCat(*,*) := I$. This 2-functor sends a $\V$-category $\C$ to the ordinary category $\C_0$ whose objects are those of $\C$, and whose arrows $f \colon A \to B$ are $\V$-arrows of the form $f \colon I \to \C(A,B)$. Composition is defined by
    \[ I \cong I \otimes I \overset{f \otimes g}{\longrightarrow} \C(A,B) \otimes \C(B,C) \overset{\then}{\longrightarrow} \C(A,C). \]
    A $\V$-functor $F \colon \C \to \D$ is mapped to the functor $F_0 \colon \C_0 \to \D_0$ that sends $A \mapsto FA$ and $f \colon I \to \C(A,B)$ to $Ff := f \then F_{A,B}$. Finally, a $\V$-natural transformation $\tau \colon F \Rightarrow G$ with components $\tau_A \colon I \to \D(FA,GA)$ is sent to the natural transformation $\tau_0 \colon F_0 \Rightarrow G_0$ with components $(\tau_0)_A := \tau_A$.
\end{definition}

\begin{definition}[Monoidal product of $\V$-categories.] \label{def:monoidal-prod-Vcats}
    Given $\V$-categories $\C$ and $\D$, their \emph{monoidal product} $\C \otimes \D$ has objects $\Ob(\C \otimes \D) := \Ob(\C) \times \Ob(\D)$ and arrows $(\C \otimes \D)[(A_1,A_2),(B_1,B_2)] := \C(A_1,B_1) \otimes \D(A_2,B_2)$.  
    Composition and identities are given by
    \[
    \begin{tikzcd}[cramped, sep = scriptsize]
    	{[\C(A_1,B_1) \otimes \D(A_2, B_2)] \otimes [\C(B_1,C_1) \otimes \D(B_2, C_2)]} \\
    	{[\C(A_1,B_1) \otimes \C(B_1, C_1)] \otimes [\D(A_2,B_2) \otimes \D(B_2, C_2)]} & {\C(A_1,C_1) \otimes \D(A_2,C_2)}
    	\arrow["m"', from=1-1, to=2-1]
    	\arrow["{\then_{\C \otimes \D}}", dashed, from=1-1, to=2-2]
    	\arrow["{\then_\C \otimes \then_\D}"', from=2-1, to=2-2]
    \end{tikzcd}
    \qquad
    \begin{tikzcd}[cramped, sep = scriptsize]
    	{I \cong I \otimes I } \\
    	{\C(A,A) \otimes \D(B,B)}
    	\arrow["{\id_A \otimes \id_B}"', from=1-1, to=2-1]
    \end{tikzcd}
    \]
    where $m$ is the unique coherence isomorphism composed of $\alpha$ and $\sigma$. This product has $I_\VCat$ as unit.
\end{definition}

\begin{definition}
    Let $\nu \colon \C_0 \times \C_0 \to (\C \otimes \C)_0$ be the identity-on-objects functor sending pairs $f_i \colon I \to \C(A_i,B_i)$ to $f_1 \otimes f_2$.
    Given a $\V$-functor $F \colon \C \otimes \C \to \C$, we define $\bar F_0$ as $\nu \then F_0 \colon \C_0 \times \C_0  \to \C_0$.
\end{definition}

\begin{definition}[Monoidal $\V$-category]
    A \emph{monoidal $\V$-category} $(\C, \boxtimes, J)$ consists of a $\V$-category $\C$, a $\V$-functor $\boxtimes \colon \C \otimes \C \to \C$, a unit object $J \in \Ob(\C)$, and $\V$-natural isomorphisms with components
    \[ 
        a_{A,B,C} \colon I \to \C((A \boxtimes B) \boxtimes C, A \boxtimes (B \boxtimes C)), \qquad 
        r_A \colon I \to \C(A \boxtimes J, A), \qquad
        l_A \colon I \to \C(J \boxtimes A, A),
    \]
    that make $(\C_0, \bar \boxtimes_0, J)$ into a monoidal category.
    We call $(\C,\boxtimes, J)$ \emph{symmetric}, if it comes equipped with a $\V$-natural isomorphism $s_{A,B} \colon I \to \C(A \boxtimes B, B \boxtimes A)$ that makes $(\C_0,\bar \boxtimes_0, J)$ symmetric monoidal.
\end{definition}

\begin{example}
    A $\Set$-enriched SMC may be identified with an ordinary SMC. A $\Pos$-enriched SMC is an ordinary SMC whose hom-sets are posets, and composition and the monoidal product are monotone.
\end{example}

\begin{example}\label{ex:DP-pos-enriched-smc}
    The monoidal product in $\DP$ is monotone (since $\wedge$ is). Hence, we may view $\DP$ as a symmetric monoidal $\Pos$-category.
\end{example}

\begin{example} \label{ex:monoidal-cat-category}
    A monoidal $\smcat$-category is a strict instance of a monoidal 2-category, as defined in \cite{stayCompact2016}. It consists of a 2-category $\CC$ with a 2-functor $\boxtimes \colon \CC \times \CC \to \CC$, unit object $J \in \Ob(\CC)$, and invertible 1-cells $a_{A,B,C} \colon (A \boxtimes C) \boxtimes D \cong  A \boxtimes (C \boxtimes D)$, $r_A \colon A \boxtimes J \cong A$, and $l_A \colon J \boxtimes A \cong A$ that are natural in $A,B,C$, and satisfy the pentagon and triangle identities. A symmetric structure on $\CC$ consists of 1-cells $s_{A,B} \colon A \boxtimes B \to B \boxtimes A$ that are natural in $A,B$, satisfy $s_{A,B} \then s_{B,A} = \id$ and the coherence axioms for SMCs.
\end{example}

Our main construction in \cref{thm:main-construction} nearly produces symmetric monoidal $\smcat$-categories.

\begin{definition} \label{def:nearly-symmetric-monoidal-2cat}
    A \emph{nearly strict monoidal 2-category} is a monoidal $\smcat$-category where $\boxtimes \colon \CC \times \CC \to \CC$ is only required to be a pseudofunctor  \cite[Def.~4.1.2]{johnson2Dimensional2020}. 
    In particular, interchange only holds up to coherent invertible 2-cells $\vartheta_{f_1,f_2,g_1,g_2} \colon (f_1 \boxtimes f_2) \then (g_1 \boxtimes g_2) \Rightarrow (f_1 \then g_1) \boxtimes (f_2 \then g_2)$
    called \emph{tensorators}.
    A \emph{nearly strict symmetric monoidal 2-category} is a nearly strict monoidal 2-category together with a strong 2-natural transformation \cite[Def.~4.2.1]{johnson2Dimensional2020} with 1-cell components $s_{A,B} \colon A \boxtimes B \to B \boxtimes A$ that satisfy $s_{A,B} \then s_{B,A} = \id$ and the coherence axioms for SMCs. Thus, symmetry coherence is only required to be natural up to invertible 2-cells.
    Nearly strict (symmetric) monoidal 2-categories are still particularly strict instances of the (symmetric) monoidal 2-categories of \cite{stayCompact2016}.
\end{definition}

\subsection{Change of base}

Given a monoidal functor $N \colon \V \to \W$, we can map the hom-objects of a $\V$-category $\C$ along $F$ to get a $\W$-category. This was first shown in \cite{eilenbergClosed1966}, with clear proofs available in \cite[Props.~4.2.1-3, Thm.~4.2.4]{cruttwellNormed2008}.

\begin{proposition}[Change-of-base]\label{prop:change-of-base}
    Any monoidal functor $N \colon \V \to \W$ induces a 2-functor $N_* \colon \VCat \to \WCat$, given by change-of-base along $N$. Explicitly, $N_*$ is defined as follows:
    \begin{compactitem}
        \item[(i)] A $\V$-category $\C$ is sent to the $\W$-category $F_*\C$ which has the same objects as $\C$, arrows given by $(N_*\C)(A,B) := N\C(A,B)$, with composition and identities
        \begin{align*}
            \then_{N_*\C} &:= N\C(A,B) \otimes_W N\C(B,C) \overset{\widetilde N}{\longrightarrow} N( \C(A,B) \otimes_\V \C(B,C) ) \overset{N\then_\C}{\longrightarrow} N\C(A,C), \\
            \id_{*A} &:= I_\W \overset{N_\epsilon}{\longrightarrow} NI_\V \overset{N\id_A}{\longrightarrow} N\C(A,A).
        \end{align*}        
    \item[(ii)] A $\V$-functor $F \colon \C \to \D$ maps to the $\W$-functor $N_*F \colon N_*\C \to N_*\D$ which acts on objects by $(N_*F)_\Ob := F_\Ob$ and on arrows by $(N_*F)_{A,B} := NF_{A,B}$.
    \item[(iii)] A $\V$-natural transformation $\tau \colon F \Rightarrow G$ maps to the $\W$-natural transformation $N_*\tau \colon N_*F \Rightarrow N_*G$ with components $(N_* \tau)_A := I_\W \overset{N_\epsilon}{\longrightarrow} NI_\V \overset{N\tau_A}{\longrightarrow} N\D(FA,GA)$.
    \end{compactitem}
\end{proposition}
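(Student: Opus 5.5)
The proof is a direct verification in three layers: $N_*\C$ is a $\W$-category, $N_*F$ is a $\W$-functor, $N_*\tau$ is a $\W$-natural transformation, and finally the assignment is 2-functorial. Throughout, we use only the lax monoidal axioms of $N$ (associativity and unitality of $\widetilde N$ and $N_\epsilon$) and naturality of $\widetilde N$.

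\textbf{Step 1: $N_*\C$ is a $\W$-category.} For associativity, paste the associativity hexagon for $\widetilde N$ onto the image under $N$ of the associativity square of $\C$. Concretely, $(\widetilde N \otimes 1)\then \widetilde N \then N(\then\otimes 1)\then N\then$ is rewritten as follows:
\begin{itemize}
\item use naturality of $\widetilde N$ to move $N(\then \otimes 1)$ past $\widetilde N$;
\item apply the hexagon $(\widetilde N\otimes 1)\then\widetilde N\then N\alpha_\V=\alpha_\W\then(1\otimes\widetilde N)\then\widetilde N$;
\item apply $N$ to the associativity of $\then_\C$;
\item use naturality of $\widetilde N$ again.
\end{itemize}
Unitality is analogous. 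The composite $\lambda_\W^{-1}\then(N_\epsilon\otimes 1)\then(N\id_A\otimes 1)\then\widetilde N\then N\then_\C$ is rewritten by naturality into $(N_\epsilon\otimes1)\then\widetilde N\then N(\id_A\otimes 1)\then N\then_\C$. The left unit axiom of $N$ then identifies the first part with $N\lambda_\V^{-1}$, and the result follows from $N$ applied to the unit law of $\C$.

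\textbf{Step 2: $N_*F$ is a $\W$-functor, and $N_*\tau$ is $\W$-natural.} Preservation of composition follows from naturality of $\widetilde N$ in both arguments, which gives $(NF\otimes NF)\then\widetilde N=\widetilde N\then N(F\otimes F)$, combined with $N$ applied to the functor square for $F$. Preservation of identities is immediate: apply $N$ to $\id_A\then F=\id_{FA}$ and precompose with $N_\epsilon$. For $N_*\tau$, the $\V$-naturality square of $\tau$ consists of composites of the form $\lambda^{-1}\then(\tau_A\otimes F)\then\then$ and $\rho^{-1}\then(G\otimes\tau_B)\then\then$. Applying $N$ to it and inserting $N_\epsilon$ together with the unit axioms $(N_\epsilon\otimes 1)\then\widetilde N\then N\lambda=\lambda$ (and likewise for $\rho$) converts it into the $\W$-naturality square. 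This works in the same way as the unitality argument in Step 1.

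\textbf{Step 3: 2-functoriality.} On objects and 1-cells this is strict and immediate: $N_*(F\then G)$ has arrow components $N(F\then G)=NF\then NG$ by functoriality of $N$, and $N_*\id=\id$. For 2-cells, vertical composition of $\V$-natural transformations is $\lambda^{-1}\then(\tau_A\otimes\tau'_A)\then\then$, and we must show its image equals the $\W$-vertical composite of the images. This reduces to the identity $(N_\epsilon\otimes N_\epsilon)\then\widetilde N=\lambda_\W\then N_\epsilon\then N\lambda_\V^{-1}$ (up to orientation of unitors), which follows from the unit axiom of $N$ together with $\lambda_I=\rho_I$. Horizontal composition and identity 2-cells are handled by the same identity, or alternatively via whiskering, using the preservation of 1-cell composition already established.

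\textbf{Main obstacle.} I expect the main obstacle to be purely bookkeeping: the 2-cell compatibility in Step 3 and the naturality in Step 2, where the coherence isomorphisms $\lambda$ and $\rho$ of both $\V$ and $\W$ interact with $N_\epsilon$. I would handle this by first isolating the single lemma $(N_\epsilon\otimes N_\epsilon)\then\widetilde N = \lambda_\W\then N_\epsilon\then N\lambda_\V^{-1}$ and then reusing it throughout.
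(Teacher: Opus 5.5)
Your direct verification is correct and is essentially the standard diagram chase the paper defers to; the paper gives no proof of its own and cites Eilenberg--Kelly and Cruttwell, Props.~4.2.1--3 and Thm.~4.2.4. That argument rests, as yours does, on naturality of $\widetilde N$, the monoidal-functor axioms, and the identity $(N_\epsilon\otimes N_\epsilon)\then\widetilde N=\lambda_{\W}\then N_\epsilon\then N\lambda_{\V}^{-1}$ (a version of which appears in the paper's diagram for \cref{prop:change-of-base-monoidal-transfer}).

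The only slip is notational. In the paper's diagrammatic order the two sides of $\V$-naturality are $\lambda^{-1}\then(\tau_A\otimes G_{A,B})\then{\then_{\D}}$ and $\rho^{-1}\then(F_{A,B}\otimes\tau_B)\then{\then_{\D}}$; with $F$ and $G$ placed as you wrote them the composites are not well-typed. This does not affect the argument.
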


Change-of-base respects monoidal transformations. The following is \cite[Prop.~4.3.1, Thm.~4.3.2]{cruttwellNormed2008}.

\begin{proposition} \label{prop:change-of-base-2-functorial}
    The change-of-base construction induces a 2-functor  $(-)_* \colon \MMoncat \to \TTwocat$. It sends a monoidal category $\V$ to the 2-category $\VCat$, a monoidal functor $N \colon \V \to \W$ to the 2-functor $N_* \colon \VCat \to \WCat$, and a monoidal transformation $\tau \colon N \Rightarrow M$ to the 2-natural transformation $\tau_* \colon N_* \Rightarrow M_*$ whose component $\tau_{*\C} \colon N_*\C \to M_*\C$ is the $\W$-functor with $(\tau_{*\C})_\Ob = \id_{\Ob(\C)}$ and $(\tau_{*\C})_{A,B} := \tau_{\C(A,B)}$.
\end{proposition}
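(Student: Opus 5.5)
The plan is to build the three layers of the 2-functor $(-)_*$ in turn: on 0-cells, 1-cells and 2-cells. After that, I verify strict 2-functoriality. On 0-cells $\V \mapsto \VCat$ there is nothing to do. For a monoidal functor $N$, \cref{prop:change-of-base} already supplies the 2-functor $N_*$.

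For a monoidal transformation $\tau \colon N \Rightarrow M$, I would first check that $\tau_{*\C}$ is a $\W$-functor.
\begin{itemize}
\item For composition, I need $(\tau_{\C(A,B)} \otimes \tau_{\C(B,C)}) \then \widetilde M \then M(\then_\C) = \widetilde N \then N(\then_\C) \then \tau_{\C(A,C)}$. This follows by pasting two squares. The first is the monoidality square of $\tau$, namely $(\tau \otimes \tau) \then \widetilde M = \widetilde N \then \tau_{\otimes}$. The second is the naturality square of $\tau$ at the arrow $\then_\C$.
\item For identities, I use the unit condition $N_\epsilon \then \tau_I = M_\epsilon$ together with naturality of $\tau$ at $\id_A$.
\end{itemize}

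Next, I would check that the family $\C \mapsto \tau_{*\C}$ is 2-natural in $\C$.
\begin{itemize}
\item Naturality for 1-cells means that for each $\V$-functor $F$ we have $N_*F \then \tau_{*\D} = \tau_{*\C} \then M_*F$. Since $\tau_*$ is identity on objects, this reduces to naturality of $\tau$ at $F_{A,B}$, i.e.\ $NF_{A,B} \then \tau = \tau \then MF_{A,B}$.
\item For 2-cells, given a $\V$-natural transformation $\sigma$, the whiskerings $N_*\sigma \ast \tau_{*\D}$ and $\tau_{*\C} \ast M_*\sigma$ must agree. On components this reads $N_\epsilon \then N\sigma_A \then \tau = M_\epsilon \then M\sigma_A$. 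It follows from naturality at $\sigma_A$ and the unit axiom.
\end{itemize}

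Then comes functoriality of $(-)_*$ itself.
\begin{itemize}
\item Preservation of 1-cell composition, $(N \then P)_* = N_* \then P_*$, is checked by unfolding the composite's structure maps $\widetilde{(NP)} = \widetilde P \then P\widetilde N$ and $(NP)_\epsilon = P_\epsilon \then PN_\epsilon$. These match exactly the iterated change-of-base on composition and identities. On functors and natural transformations it is immediate, using $P_\epsilon \then PN_\epsilon \then PN\tau_A$.
\item $(\id_\V)_* = \id$ is clear.
\item For 2-cells, vertical composites satisfy $(\tau \vertthen \tau')_{*} = \tau_* \vertthen \tau'_*$ componentwise, because components are just $\tau_{\C(A,B)}$ composed.
\item Horizontal composites $\tau \horthen \kappa$ have components $\tau \then \kappa_M = \kappa_N \then P\tau$ (by the interchange law). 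These agree with the horizontal composite of $\tau_*$ and $\kappa_*$ computed in $\TTwocat$, which is identity on objects and hence just composes the hom-components.
\end{itemize}

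None of this is conceptually hard. The step I expect to need the most care is the well-definedness of $\tau_{*\C}$ as a $\W$-functor, specifically the composition square. There, one must apply the monoidal-transformation axiom in the correct orientation and keep $\widetilde N$ distinct from $\widetilde M$. Strictness of the resulting 2-functor is mostly bookkeeping, because all composites of $\W$-functors here are identity on objects.
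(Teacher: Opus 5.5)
Your proposal is correct and is essentially the direct verification that the paper delegates to \cite[Prop.~4.3.1, Thm.~4.3.2]{cruttwellNormed2008}. The squares you check are the right ones: monoidality plus naturality of $\tau$ give $\W$-functoriality of $\tau_{*\C}$, naturality plus the unit axiom give 2-naturality, and unfolding the structure maps of composite monoidal functors gives strict functoriality of $(-)_*$. One small slip: for $\kappa \colon P \Rightarrow Q$ the horizontal composite has components $P\tau \then \kappa_M = \kappa_N \then Q\tau$, since you dropped a $P$ and wrote $P$ where $Q$ belongs. With that fix it matches $P_*(\tau_{*\C}) \then \kappa_{*M_*\C}$ as you intended.
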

Finally, symmetric monoidal structure transfers along the change-of-base, provided $N$ is symmetric.

\begin{proposition} \label{prop:change-of-base-monoidal-transfer}
    Let $N \colon \V \to \W$ be a symmetric monoidal functor and $(\C, \boxtimes, J)$ a (symmetric) monoidal $\V$-category. Then $N_*\C$ is a (symmetric) monoidal $\W$-category with $\W$-functor $\boxtimes^N \colon N_*\C \otimes N_*\C \to N_*\C$ which maps objects by $(A,B) \mapsto A \boxtimes B$ and arrows according to 
    \[ 
        N\C(A_1,B_1) \otimes N\C(A_2,B_2) \overset{\widetilde N}{\longrightarrow} N(\C(A_1,B_1) \otimes \C(A_2,B_2)) \overset{N\boxtimes}{\longrightarrow} N\C(A_1 \boxtimes A_2, B_1 \boxtimes B_2).
    \]
    The unit is given by $J \in \Ob(\C) = \Ob(N_*\C)$ and the coherence isomorphisms have components $N_*a$, $N_*r$, $N_*l$ (and $N_*s$). Furthermore, $N$ induces a strict (symmetric) monoidal functor $N_*^0 \colon \C_0 \to (N_*\C)_0$ between the underlying categories that is identity-on-objects and sends $f \colon I_V \to \C(A,B)$ to $N_*f = N_\epsilon \then Nf$.
\end{proposition}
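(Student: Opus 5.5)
The plan is to build the monoidal structure on $N_*\C$ entirely out of the 2-functor $N_*$ of \cref{prop:change-of-base-2-functorial}, rather than checking coherence axioms by hand. The only new ingredient is a comparison between $N_*$ and the monoidal product of enriched categories.

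\textbf{Step 1: the comparison $\W$-functor.} For $\V$-categories $\C,\D$ I will define an identity-on-objects $\W$-functor
\[ \phi_{\C,\D}\colon N_*\C \otimes N_*\D \to N_*(\C\otimes\D), \]
whose hom-components are $\widetilde N\colon N\C(A_1,B_1)\otimes N\D(A_2,B_2)\to N(\C(A_1,B_1)\otimes\D(A_2,B_2))$. I will also take the unit comparison $I_{\W\mathrm{Cat}}\to N_*I_{\VCat}$ to be given by $N_\epsilon$.

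\begin{compactitem}
\item Preservation of identities follows from the unit axiom of the lax monoidal functor $N$.
\item Preservation of composition amounts to checking that $\widetilde N$ commutes with the middle-four interchange $m$ of \cref{def:monoidal-prod-Vcats}. Since $m$ is built from $\alpha$ and $\sigma$, this uses the associativity axiom of $N$ together with the \emph{symmetry} of $N$, and naturality of $\widetilde N$ then handles $\then_\C\otimes\then_\D$.
\end{compactitem}

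I expect this compatibility with $m$ to be the main obstacle, and the only place where symmetry of $N$ is genuinely needed. I would first prove it for the basic middle-four map $(a\otimes b)\otimes(c\otimes d)\to(a\otimes c)\otimes(b\otimes d)$ by a diagram chase. The general statement then follows because $m$ is unique by coherence.

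I will also record two routine facts: $\phi$ is natural in $\C,\D$ with respect to $\V$-functors and $\V$-natural transformations, and it is associative and unital. Together these make $N_*$ a lax monoidal 2-functor.

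\textbf{Step 2: transfer the structure.} I define $\boxtimes^N := \phi_{\C,\C}\then N_*(\boxtimes)$. On arrows this is exactly the formula in the statement, and it is a $\W$-functor because it is a composite of $\W$-functors. The unit object is $J$. The coherence isomorphisms are $N_*a$, $N_*r$, $N_*l$ (and $N_*s$), which are $\W$-natural isomorphisms by \cref{prop:change-of-base-2-functorial}, since 2-functors preserve invertibility of 2-cells. There is one check here: the domain and codomain functors of $N_*a$, namely $N_*((\boxtimes\otimes 1)\then\boxtimes)$ and so on, must agree with the corresponding composites built from $\boxtimes^N$. This is exactly naturality of $\phi$ plus its associativity from Step 1.

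\textbf{Step 3: the underlying functor and the axioms.} The pentagon, triangle, hexagon and $s\then s=\id$ axioms are to be verified in $(N_*\C)_0$. Consider the identity-on-objects functor $N_*^0\colon\C_0\to(N_*\C)_0$ defined by $f\mapsto N_\epsilon\then Nf$; it is a functor by the unit and associativity axioms of $N$.

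I will show that $N^0_*$ strictly preserves $\bar\boxtimes_0$ on arrows. This means
\[ (N_\epsilon\then Nf_1)\otimes(N_\epsilon\then Nf_2)\then\widetilde N\then N\boxtimes = N_\epsilon\then N((f_1\otimes f_2)\then\boxtimes) \]
modulo the unitors. This holds by naturality of $\widetilde N$ and the unit axiom relating $\widetilde N_{I,I}$, $N_\epsilon$ and $\lambda$.

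By construction $N^0_*$ sends the components of $a,r,l,s$ to those of $N_*a$, etc. Hence every coherence diagram in $(N_*\C)_0$ is the image under $N_*^0$ of the corresponding diagram in $\C_0$, which commutes by hypothesis. Therefore $(N_*\C)_0$ is (symmetric) monoidal, and $N_*^0$ is strict (symmetric) monoidal.
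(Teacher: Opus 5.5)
Your proposal is correct and takes essentially the paper's route: your comparison $\phi_{\C,\C}$ is the paper's $(\widetilde N)_*$, and its $\W$-functoriality is exactly the fact, cited from Cruttwell, that $\widetilde N$ is a monoidal transformation when $N$ is symmetric; the axioms are then transferred along the strict monoidal functor $N_*^0$ by the same unit/naturality computation the paper uses. One small correction to your commentary: in the symmetric case, checking that $N_*s$ has the right source and target also requires $\phi \then N_*(\sigma) = \sigma \then \phi$, which is the symmetry axiom of $N$ again, so compatibility with $m$ is not the only place where symmetry of $N$ is needed.
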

\begin{proof}
    Cruttwell provides a conceptual proof in \cite[Thm.~5.7.1]{cruttwellNormed2008} for transferring monoidal structure using that monoidal $\V$-categories are pseudomonoids in $\VCat$ and thus are preserved by the monoidal $(-)_*$. We give a direct explanation. When $N$ is symmetric, $\widetilde N \colon N(-) \bullet N(=) \to N(- \otimes =)$ is a monoidal transformation \cite[Prop.~5.3.6]{cruttwellNormed2008} and hence induces $\W$-functor $(\widetilde N)_* \colon N_*\C \otimes N_*\C \to N_*(\C \otimes \C)$. Thus, $\boxtimes^N$ can be obtained as the composition of $\W$-functors $(\widetilde N)_* \then (N_*\boxtimes)$.
    Using the fact that $N$ is symmetric monoidal, one can show that the proposed coherence isomorphisms have the correct source and target functors.
    To see that they satisfy the SMC axioms, one observes that $N_*^0$ is a strict monoidal functor: For $f \in \C_0(A,B)$ and $g \in \C_0(B,C)$ one has $N_*(f \then g) = (N_* f) \then (N_* g)$. Similarly, for $f_i \in \C_0(A_i, B_i)$ one has $N_*(f_1 \bar \boxtimes_0 f_2) = (N_*f_1) \bar \boxtimes^N_0 (N_*f_1)$. Moreover, when $\C$ is symmetric, so is $N_*^0$, since $N_*^0s = N_*s$. This uses the definitions, the fact that $N$ is monoidal, and properties of SMCs,
    \[
    \begin{tikzcd}[row sep = small]
    	{I_\W} & {I_\W \otimes I_\W} & {NI_\V \otimes NI_\V} & {N\C(A_1,B_1) \otimes N\C(A_2,B_2)} \\
    	& {I_\W \otimes NI_\V} \\
    	{N(I_\V)} && {N(I_\V \otimes I_\V)} & {N(\C(A_1,B_1) \otimes \C(A_2,B_2))} \\
    	&&& {N(\C(A_1\boxtimes A_2, B_1 \boxtimes B_2))}
    	\arrow["{\lambda^{-1}}", from=1-1, to=1-2]
    	\arrow["{N_\epsilon}"', from=1-1, to=3-1]
    	\arrow["{N_\epsilon \otimes N_\epsilon}", from=1-2, to=1-3]
    	\arrow[from=1-2, to=2-2]
    	\arrow["{Nf_1 \otimes Nf_2}", from=1-3, to=1-4]
    	\arrow["{\widetilde N}", from=1-3, to=3-3]
    	\arrow["{(\text{nat } \widetilde N)}"{description}, draw=none, from=1-3, to=3-4]
    	\arrow["{\widetilde N}", from=1-4, to=3-4]
    	\arrow["{(\text{nat } \lambda)}"{description}, draw=none, from=2-2, to=1-1]
    	\arrow[""{name=0, anchor=center, inner sep=0}, from=2-2, to=1-3]
    	\arrow["\lambda"{description}, from=2-2, to=3-1]
    	\arrow["{(N \text{ monoidal})}"{description}, draw=none, from=2-2, to=3-3]
    	\arrow["{N(\rho^{-1})}"', from=3-1, to=3-3]
    	\arrow["{N(f_1 \otimes f_2)}"', from=3-3, to=3-4]
    	\arrow["{N(*)}", from=3-4, to=4-4]
    	\arrow[draw=none, from=1-2, to=0]
    \end{tikzcd}
    \]
    where $(*)$ denotes either $\then$ or $\boxtimes$. Remarkably, it does not require $\W$-functoriality of $\boxtimes^N$. 
    It follows that the coherence diagrams of $\C_0$ also hold for $N_*a$, $N_*r$, $N_*l$ and $N_*s$ in $(N_*\C)_0$ when mapped via $N_*^0$.
\end{proof}

\section{Uncertainty with external parametrization}\label{sec:parametric-uncertainty}

This section applies the change-of-base techniques of \cref{sec:background} to equip SMCs with parametric uncertainty. 
We start by describing how to represent uncertainty semantics using symmetric monoidal monads.
Next, we introduce the partial slice construction used to obtain external parametrization. 
Finally, we present our main construction which replaces the arrows of a symmetric monoidal $\V$-category $\C$ by parametric maps $A \to \C(X,Y)$ in the Markov category arising from an uncertainty monad.
We further explain how structures of the original SMC can be transferred to the resulting 2-category.

\subsection{Uncertainty monads}\label{subsec:uncertainty-monads}

Many uncertainty semantics can be viewed as generalized collections of deterministic objects. 
For instance, subsets, intervals, and distributions of objects follow this pattern. Valuing maps in such generalized collections provides a model for uncertain processes.
This idea is formalized by symmetric monoidal monads \cite[Section 3]{fritzSynthetic2020} whose Kleisli categories often form Markov categories. 
The Markov category axioms \cite[Section 2]{fritzSynthetic2020} capture how such uncertain processes compose.

\begin{definition}[Symmetric monoidal monad]
    Let $\V$ be an SMC. A monad $\monadTup$ on $\V$ with multiplication $\monadMul$ and unit $\monadUnit$
    is called \emph{symmetric monoidal} if it comes equipped with morphisms
    $$
    \symMonadMorOf{X}{Y} \colon \monadM(X) \otimes \monadM(Y) \to \monadM(X \otimes Y)
    $$
   that are natural in $X$ and $Y$, make $\monadM$ into a symmetric monoidal functor with unit $\monadUnit_I$, and make $\monadMul$ and $\monadUnit$ into monoidal transformations.
    If $I$ is terminal in $\V$, and $\monadM I \cong I$, we call $\monadM$ \emph{affine}.
\end{definition}

Kleisli categories of symmetric monoidal monads are SMCs. The following result is proved in \cite[Prop.~3.1, Cor.~3.2]{fritzSynthetic2020}.

\begin{proposition} \label{prop:monad-markov-category}
    Given a symmetric monoidal monad $(\monadM, \monadUnit, \monadMul, \symMonadMor)$ on $\V$, its Kleisli category $\Kl_\monadM$ is symmetric monoidal under the product that sends $f \colon A \to \monadM X$ and $g \colon B \to \monadM Y$ to 
    \[
    A \otimes B \overset{f \otimes g}{\longrightarrow} \monadM X \otimes \monadM Y \overset{\symMonadMor}{\longrightarrow} \monadM(X \otimes Y).
    \]
    Moreover, the identity-on-objects functor $\leftFunctor_\monadM \colon \V \to \Kl_\monadM$ sending $f \mapsto f \then \monadUnit$ is strict symmetric monoidal. Furthermore, if $\V$ is a Markov category and $\monadM$ affine, then $\Kl_\monadM$ becomes a Markov category with copy and delete maps given by $\leftFunctor_\monadM(\copyMor_X)$ and $\leftFunctor_\monadM(\delMor_X)$.
\end{proposition}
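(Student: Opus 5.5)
The statement has three parts, which I will verify in order: $\Kl_\monadM$ is symmetric monoidal with the proposed product, $\leftFunctor_\monadM$ is strict symmetric monoidal, and the Markov structure appears when $\monadM$ is affine. Throughout I work with Kleisli composition $f \then_{\Kl} g := f \then \monadM g \then \monadMul$. I will use the axioms of a symmetric monoidal monad in the following form:
\begin{compactitem}
    \item[(a)] naturality of $\symMonadMor$;
    \item[(b)] the lax monoidal coherence of $(\monadM,\symMonadMor,\monadUnit_I)$ with respect to $\alpha,\lambda,\rho$, together with symmetry $\symMonadMor_{X,Y} \then \monadM\sigma = \sigma \then \symMonadMor_{Y,X}$;
    \item[(c)] $\monadUnit$ is monoidal: $\monadUnit_X \otimes \monadUnit_Y \then \symMonadMor = \monadUnit_{X\otimes Y}$;
    \item[(d)] $\monadMul$ is monoidal: $(\monadMul \otimes \monadMul) \then \symMonadMor = \symMonadMor \then \monadM\symMonadMor \then \monadMul$.
\end{compactitem}

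\textbf{Step 1: the product is a bifunctor.} On objects I take the product of $\V$, and on arrows the formula in the statement. Preservation of identities is exactly (c). The interchange law is the main computation. For $f\colon A\to\monadM X$, $g\colon X \to \monadM Y$, $f'\colon A'\to \monadM X'$ and $g'\colon X'\to\monadM Y'$, I expand $(f\otimes f')\then_{\Kl}(g\otimes g')$. This gives
\[
(f\otimes f') \then \symMonadMor \then \monadM(g\otimes g') \then \monadM\symMonadMor \then \monadMul .
\]
I then use naturality (a) to move $\symMonadMor$ past $\monadM g\otimes \monadM g'$, and afterwards (d) to replace $\symMonadMor \then \monadM\symMonadMor\then\monadMul$ by $(\monadMul\otimes\monadMul)\then\symMonadMor$. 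The result is $(f\then_{\Kl} g)\otimes(f'\then_{\Kl} g')$. I expect this step to be the main obstacle, since it is the only place where the interaction between $\monadMul$ and $\symMonadMor$ matters.

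\textbf{Step 2: structure maps and coherence.} I define the coherence maps as $\leftFunctor_\monadM(\alpha)$, $\leftFunctor_\monadM(\lambda)$, $\leftFunctor_\monadM(\rho)$ and $\leftFunctor_\monadM(\sigma)$. Since $\leftFunctor_\monadM$ is a functor, with Kleisli unit laws giving $(f\then\monadUnit)\then_{\Kl}(g\then\monadUnit)=f\then g\then\monadUnit$, the pentagon, triangle and hexagon equations transfer directly from $\V$, as do invertibility and $\sigma\then\sigma=\id$. What remains is naturality of these maps with respect to general Kleisli arrows. For $\alpha$, I need
\[
((f\otimes g)\otimes h) \then_{\Kl} \leftFunctor_\monadM(\alpha) = \leftFunctor_\monadM(\alpha) \then_{\Kl} (f\otimes(g\otimes h)).
\]
After unfolding, both sides reduce by the monad unit laws to composites of $f\otimes g\otimes h$ with two $\symMonadMor$'s and $\monadM\alpha$, so the equation is exactly the associativity axiom in (b) combined with naturality of $\alpha$ in $\V$. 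The cases of $\lambda$, $\rho$ and $\sigma$ are handled identically, using the unit axioms and the symmetry axiom in (b).

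\textbf{Step 3: strictness of $\leftFunctor_\monadM$.} I must check that $\leftFunctor_\monadM(f\otimes g) = \leftFunctor_\monadM f\otimes \leftFunctor_\monadM g$. This amounts to $(f\otimes g)\then\monadUnit = (f\otimes g)\then(\monadUnit\otimes\monadUnit)\then\symMonadMor$, which is (c). Preservation of the coherence maps holds by their definition in Step 2, so $\leftFunctor_\monadM$ is strict symmetric monoidal.

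\textbf{Step 4: the Markov structure.} Assume $\V$ is Markov and $\monadM$ is affine. A strict symmetric monoidal functor carries commutative comonoids to commutative comonoids, so $\leftFunctor_\monadM(\copyMor_X)$ and $\leftFunctor_\monadM(\delMor_X)$ satisfy the comonoid equations, and their compatibility with $\otimes$ follows from that in $\V$. The remaining axiom is that deletion is natural, i.e.\ the unit $I$ is terminal in $\Kl_\monadM$. The Kleisli arrows $A\to I$ are $\V$-maps $A\to \monadM I\cong I$. Since $I$ is terminal in $\V$, there is exactly one such map. Hence $f\then_{\Kl}\leftFunctor_\monadM(\delMor)=\leftFunctor_\monadM(\delMor)$ for every $f$.
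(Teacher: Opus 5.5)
Your proof is correct, and it is the standard direct verification behind the result the paper simply cites from \cite[Prop.~3.1, Cor.~3.2]{fritzSynthetic2020} without giving an argument of its own. The ingredients match that argument: bifunctoriality from naturality of $\symMonadMor$ and monoidality of $\monadMul$, coherence lifted along $\leftFunctor_\monadM$, and naturality of deletion from $\monadM I \cong I$ being terminal.

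One small ordering point. In Step 2, transferring the pentagon, triangle and hexagon uses more than functoriality of $\leftFunctor_\monadM$. It also needs $\leftFunctor_\monadM(f \otimes g) = \leftFunctor_\monadM f \otimes \leftFunctor_\monadM g$, to handle terms such as $\leftFunctor_\monadM(\alpha_{A,B,C}) \otimes \leftFunctor_\monadM(\id_D)$. You only establish this in Step 3. This is harmless, since that computation relies on (c) alone.
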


Since many common categories such as $\Set$ are Cartesian and hence Markov categories, \cref{prop:monad-markov-category} provides a convenient way introduce uncertainty. Moreover, by the strictification theorem for Markov categories \cite[Prop.~10.16, Thm.~10.17]{fritzSynthetic2020} we will assume without loss of generality that $\Kl_\monadM$ is a strict monoidal category whenever it is convenient. We now provide some key examples.

\begin{example}
    The covariant nonempty powerset functor $\PowNoEmpty \colon \Set \to \Set$ is an affine monad with $\mu$ given by union, and $\eta$ mapping elements to singletons. It is symmetric monoidal under $\symMonadMor_{X,Y}$ which sends pairs of subsets to their Cartesian product. $\Kl_\PowNoEmpty$ consists of sets and multi-valued functions.
\end{example}

\begin{example}
    The Giry monad $\Dist \colon \Meas \to \Meas$, which sends a measurable space $(X, \SigAlg_X)$ to the space of probability measures on it (with an appropriate $\sigma$-algebra), forms an affine symmetric monoidal monad \cite[Section 4]{fritzSynthetic2020}. It's Kleisli category is $\Stoch$, the category of Markov kernels.
\end{example}

\begin{example}
    The arrow functor $\Arr \colon \Pos \to \Pos$ maps each poset to the set of intervals $\interval{a}{b}$, ordered by their end-points: $\interval{a}{b} \posetleq \interval{c}{d}$ iff $a \posetleq c \wedge b \posetleq d$. 
    It forms an affine symmetric monoidal monad with unit $\monadUnit_\posetP: a \elementTo \interval{a}{a}$, multiplication $\monadMul_\posetP \colon \interval{\interval{a}{b}}{\interval{c}{d}} \elementTo \interval{a}{d}$, and $\symMonadMorOf{\posetP}{\posetQ}: \interval{a}{b} \otimes \interval{c}{d} \elementTo \interval{a \otimes c}{b \otimes d}$.
\end{example}

\begin{example}
    The twisted arrow functor $\TwiArr: \Pos \to \Pos$ orders intervals by inclusion: $\interval{a}{b} \posetleq \interval{c}{d}$ iff $c \posetleq a \wedge b \posetleq d.$ Although it is of interest as an uncertainty semantics \cite{censiUncertainty2017}, one cannot define natural transformations $\monadMul$ and $\monadUnit$ to make it into a monad.
\end{example}
Using change-of-base, we can endow the hom-sets of an SMC with uncertainty.

\begin{theorem} \label{thm:monad-uncertainty}
    Given a $\V$-category $\C$ and a symmetric monoidal monad $(\monadM, \monadUnit, \monadMul, \symMonadMor)$ on $\V$, there is a $\V$-category $\monadM_*\C$ that has the same objects as $\C$, hom-objects given by $\monadM \C(X,Y)$, composition $\symMonadMor \then \monadM(\then_\C)$, and identities $\eta_I \then \monadM(\id_A)$. There is an identity-on-objects $\V$ functor $\iota \colon \C \to \monadM_* \C$ that maps arrows via $\eta_{\C(X,Y)} \colon \C(X,Y) \to \monadM \C(X,Y)$. If $\C$ is (symmetric) monoidal, then so is $\monadM_* \C$, and the underlying functor $\iota_0 \colon \C_0 \to (\monadM \C)_0$ is strict (symmetric) monoidal.
\end{theorem}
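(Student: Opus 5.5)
The plan is to read the statement as an instance of change-of-base along the symmetric monoidal functor underlying $\monadM$. A symmetric monoidal monad is in particular a symmetric monoidal functor $\monadM \colon \V \to \V$, with $\widetilde\monadM := \symMonadMor$ and $\monadM_\epsilon := \monadUnit_I$. By \cref{prop:change-of-base}, $\monadM_*\C$ is then a $\V$-category with the same objects as $\C$ and hom-objects $\monadM\C(X,Y)$. Its composition is $\symMonadMor \then \monadM(\then_\C)$ and its identities are $\monadUnit_I \then \monadM(\id_A)$, exactly as in the statement. Nothing needs to be checked here beyond matching the notation.

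Next I will obtain $\iota$ from the 2-functoriality of change-of-base in \cref{prop:change-of-base-2-functorial}. The unit $\monadUnit \colon \Id_\V \Rightarrow \monadM$ is a monoidal transformation by the definition of a symmetric monoidal monad. The identity functor on $\V$ is strict symmetric monoidal, and $(\Id_\V)_*\C = \C$ on the nose, since its composition is $\id \then \then_\C$ and its identities are $\id \then \id_A$. Hence $\monadUnit_{*\C} \colon \C \to \monadM_*\C$ is an identity-on-objects $\V$-functor whose components are $\monadUnit_{\C(X,Y)}$, and this is the desired $\iota$. If one preferred a direct check, $\V$-functoriality of $\iota$ amounts to two facts. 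Preservation of composition is the monoidality of $\monadUnit$, namely $(\monadUnit \otimes \monadUnit) \then \symMonadMor = \monadUnit$, combined with naturality of $\monadUnit$ applied to $\then_\C$. Preservation of identities is naturality of $\monadUnit$ at $\id_A \colon I \to \C(A,A)$.

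For the monoidal part I apply \cref{prop:change-of-base-monoidal-transfer} with $N = \monadM$. This makes $\monadM_*\C$ (symmetric) monoidal, with product $\symMonadMor \then \monadM\boxtimes$ and coherence maps $\monadM_* a$, $\monadM_* r$, $\monadM_* l$ (and $\monadM_* s$). It also gives the strict (symmetric) monoidal functor $\monadM_*^0 \colon \C_0 \to (\monadM_*\C)_0$, which sends $f$ to $\monadUnit_I \then \monadM f$.

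It remains to identify $\iota_0$ with $\monadM_*^0$. On an arrow $f \colon I \to \C(A,B)$, the functor $\iota_0$ gives $f \then \monadUnit_{\C(A,B)}$. By naturality of $\monadUnit$ this equals $\monadUnit_I \then \monadM f = \monadM_*^0 f$. The two functors therefore agree on objects and arrows, so $\iota_0$ is strict (symmetric) monoidal. The only step needing care is this final identification, together with confirming that $(\Id_\V)_*\C$ is literally $\C$, so that $\monadUnit_{*\C}$ really has domain $\C$. I expect both to be routine naturality checks.
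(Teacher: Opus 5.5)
Your proposal is correct and follows the paper's proof. Both apply \cref{prop:change-of-base,prop:change-of-base-monoidal-transfer} to $\monadM$ with $\widetilde\monadM = \symMonadMor$ and $\monadM_\epsilon = \monadUnit_I$, and both derive $\V$-functoriality of $\iota$ from $\monadUnit$ being a monoidal transformation. Obtaining $\iota$ as $\monadUnit_{*\C}$ via \cref{prop:change-of-base-2-functorial}, and checking $\iota_0 = \monadM_*^0$ by naturality of $\monadUnit$, simply make explicit steps the paper leaves implicit.
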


\begin{proof}
    Since $\monadM$ is a symmetric monoidal functor with strength $\widetilde \monadM = \symMonadMor$ and unit $\monadM_\epsilon = \monadUnit_I$, we can directly apply \cref{prop:change-of-base,prop:change-of-base-monoidal-transfer}. $\V$-functoriality of $\iota$ holds since $\monadUnit$ is a monoidal transformation.
\end{proof}

\begin{example}[Imprecise probability theory]
    Imprecise probability theories incorporate uncertainty about the choice of probabilities during statistical modeling \cite{walleyStatistical1991, halpernReasoning2005}. One general version considers subsets of probability measures. This idea is realized by $\PowNoEmpty_* \Stoch$, whose arrows are subsets of Markov kernels that compose by taking the set of possible composites. The strict symmetric monoidal inclusion $\iota_0 \colon \Stoch \to \PowNoEmpty_* \Stoch$ allows us to transfer the supply of comonoids $\copyMor$ and $\delMor$. Since $\{k : X \to Y\} \then \iota_0(\delMor_Y) = \{k \then \delMor_Y \} = \{ \delMor_X \} = \iota_0(\delMor_X)$, the result is again a Markov category. 
    Other common flavors of imprecision are captured by our constructions. For instance, applying $\Arr$ to the $\Pos$-category of sub-probability measures (ordered point-wise) yields upper and lower probabilities, while robust Bayesian methods can be modeled using the parametric uncertainty introduced in \cref{ex:external-parametrization}.
\end{example}

\subsection{Partial slice functor}\label{subsec:slice-functor}
This section defines the partial slice functor $\U \sliceFunctor \colon \W \to \CCat$, which will be used to apply parameterization to the hom-sets of a category. We prove that it is monoidal when $\U$ is strict, and demonstrate how SMC structure transfers along it.

\begin{definition}[Partial slice functor]\label{def:slice-functor}
    Given a category $\W$ and a subcategory $i \colon \U \hookrightarrow \W$, the \emph{partial slice functor} $\U \sliceFunctor \colon \W \functorArr \smcat$ sends each object $A \in \Ob(\W)$ to the comma category $\commacat{i}{\Delta_A}$ with
    \begin{compactitem}
        \item[(i)] objects given by pairs $(U \in \Ob(\U),f \colon i(U) \to A)$,
        \item[(ii)] arrows $\varphi \colon (U,f) \to (V,g)$ given by $\U$-arrows $\varphi \colon U \to V$ satisfying $f = i(\varphi) \then g$. 
    \end{compactitem}On arrows, $\U \sliceFunctor$ sends each $f \colon A \to B$ in $\W$ to the post-composition functor $\U \sliceWith{f} \colon \U\sliceWith{A} \functorArr \U\sliceWith{B}$. 
\end{definition}

\begin{proposition}
    Let $(\W, \prodW, \unitW)$ be an SMC and $i: \U \hookrightarrow \W$ a full strict monoidal subcategory. Then the partial slice functor $\U \sliceFunctor \colon \W \to \smcat$ is monoidal under the following comparison arrows in $\smcat$:
    \begin{compactitem}
        \item[(i)] The functor $\U \sliceFunctor_\epsilon \colon \unitCat \functorArr \U \sliceWith{\unitW}$ maps the single object $*$ to the identity $\idUnitW \colon \unitW \to \unitW$.
        \item[(ii)] The natural transformation $\widetilde \U \sliceFunctor_{A_1,A_2} \colon \U \sliceWith{A_1} \prodSmcat \U \sliceWith{A_2} \functorArr \U \sliceWith{(A_1 \prodW A_2)}$ maps objects $f_1 \colon U_1 \to A_1$ and $f_2 \colon U_2 \to A_2$ to $f_1 \otimes f_2 \colon U_1 \otimes U_2 \to A_1 \otimes A_2$, and arrows $\varphi \colon f_1 \to g_1$ and $\psi \colon f_2 \to g_2$ to $\varphi \otimes \psi$.
    \end{compactitem}
    Moreover, $\U \sliceFunctor$ is symmetric up to the natural invertible 2-cells $\sigma_{U_1,U_2}$.
\end{proposition}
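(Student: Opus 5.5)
I will verify directly that the data in (i) and (ii) satisfy the definition of a lax monoidal functor $\U\sliceFunctor \colon (\W,\otimes,\unitW) \to (\smcat,\times,\unitCat)$. Since $\smcat$ is a 1-category here, every axiom is an equality of functors, which I check on objects and on arrows.

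\textbf{Well-definedness.} First I check that $\widetilde{\U\sliceFunctor}_{A_1,A_2}$ is a functor. Because $\U$ is a monoidal subcategory, $U_1\otimes U_2\in\Ob(\U)$. Because $i$ is strict monoidal, $i(U_1\otimes U_2)=i(U_1)\otimes i(U_2)$, so $f_1\otimes f_2$ is an object of $\U\sliceWith{(A_1\otimes A_2)}$. If $\varphi\colon(U_1,f_1)\to(V_1,g_1)$ and $\psi\colon(U_2,f_2)\to(V_2,g_2)$, then bifunctoriality of $\otimes$ gives $(\varphi\otimes\psi)\then(g_1\otimes g_2)=f_1\otimes f_2$. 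Functoriality in $(\varphi,\psi)$ follows from functoriality of $\otimes$. For $\U\sliceFunctor_\epsilon$, I need $\unitW\in\U$, which holds by strict monoidality. Fullness is not needed for these checks; it only makes $\U\sliceWith{A}$ independent of choices.

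\textbf{Naturality.} Next I show that for $h_1\colon A_1\to B_1$ and $h_2\colon A_2\to B_2$ the naturality square commutes, i.e.\ that $(f_1\then h_1)\otimes(f_2\then h_2)=(f_1\otimes f_2)\then(h_1\otimes h_2)$. This is interchange in $\W$. On arrows both composites send $(\varphi,\psi)$ to $\varphi\otimes\psi$.

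\textbf{Coherence.} The main work is the associativity and unit axioms. The associativity hexagon requires that for objects $(f_1,f_2,f_3)$ the following two routes agree. The first route is $\bigl((f_1\otimes f_2)\otimes f_3\bigr)\then\alpha_{A_1,A_2,A_3}$, viewed in $\U\sliceWith{(A_1\otimes(A_2\otimes A_3))}$. The second route is $f_1\otimes(f_2\otimes f_3)$. This is where strictness of $\U$ is essential. Naturality of $\alpha$ gives
\[
\bigl((f_1\otimes f_2)\otimes f_3\bigr)\then\alpha_{A_1,A_2,A_3}=\alpha_{U_1,U_2,U_3}\then\bigl(f_1\otimes(f_2\otimes f_3)\bigr).
\]
Strictness of $\U$ and of $i$ makes $\alpha_{U_1,U_2,U_3}$ an identity with equal domain and codomain objects. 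So the two objects of the slice coincide on the nose, not merely up to the isomorphism $\alpha_U$. On arrows both routes give $(\varphi_1\otimes\varphi_2)\otimes\varphi_3=\varphi_1\otimes(\varphi_2\otimes\varphi_3)$, again by strictness. The unit axioms are handled identically using $\lambda$ and $\rho$: for instance, $(\id_{\unitW}\otimes f)\then\lambda_A=\lambda_U\then f=f$, since $\lambda_U=\id$ in $\U$.

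\textbf{Symmetry.} Finally I treat the symmetry claim. Symmetry of the functor would require $(f_1\otimes f_2)\then\sigma_{A_1,A_2}$ to equal $f_2\otimes f_1$. Naturality of $\sigma$ gives $(f_1\otimes f_2)\then\sigma_{A_1,A_2}=\sigma_{U_1,U_2}\then(f_2\otimes f_1)$. Since $\sigma_U$ is generally not an identity even in a strict $\U$, equality fails on the nose. Instead, $\sigma_{U_1,U_2}$ is an isomorphism in $\U\sliceWith{(A_2\otimes A_1)}$ from $(f_1\otimes f_2)\then\sigma$ to $f_2\otimes f_1$. Its naturality in $(f_1,f_2)$ is naturality of $\sigma$ in $\U$, and it is invertible because $\sigma\then\sigma=\id$. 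I expect no serious obstacle here; the delicate point is only the careful use of strictness in the coherence step.
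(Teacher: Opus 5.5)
Your proof is correct and follows the same route as the paper: functoriality and naturality come from bifunctoriality of $\otimes$, the associativity and unit axioms from strictness of $\U$, and symmetry holds only up to the parallelogram 2-cells $\sigma_{U_1,U_2}$. You give more detail than the paper, deriving the coherence axioms from naturality of $\alpha,\lambda,\rho$ and the fact that their components at $\U$-objects are identities. One correction: contrary to your aside, fullness is used, since it is what makes $\sigma_{U_1,U_2}$ a $\U$-arrow and hence a morphism in the partial slice.
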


\begin{proof}
    $\U \sliceFunctor_\epsilon$ is obviously a functor, while $\widetilde \U \sliceFunctor_{A_1,A_2}$ inherits functoriality from $\otimes$. 
    For naturality and symmetry we require
    \[
    \begin{tikzcd}[cramped, row sep = small]
    	{\U \sliceWith{A_1} \times \U \sliceWith{A_2}} & {\U \sliceWith{(A_1 \otimes A_2)}} \\
    	{\U \sliceWith{B_1} \times \U \sliceWith{B_2}} & {\U \sliceWith{(B_1 \otimes B_2)}}
    	\arrow["{\widetilde \U \sliceFunctor}", from=1-1, to=1-2]
    	\arrow["{\U \sliceWith{f_1} \times \U \sliceWith{f_2}}"', from=1-1, to=2-1]
    	\arrow["{\U \sliceWith{f_1 \otimes f_2}}", from=1-2, to=2-2]
    	\arrow["{\widetilde \U \sliceFunctor}"', from=2-1, to=2-2]
    \end{tikzcd}
    \qquad \qquad
    \begin{tikzcd}[cramped, row sep = scriptsize]
    	{\U \sliceWith{A_1} \times \U \sliceWith{A_2}} & {\U \sliceWith{(A_1 \otimes A_2)}} \\
    	{\U \sliceWith{A_2} \times \U \sliceWith{A_1}} & {\U \sliceWith{(A_2 \otimes A_1)}}
    	\arrow["{\widetilde \U \sliceFunctor}", from=1-1, to=1-2]
    	\arrow["{\sigma_\smcat}"', from=1-1, to=2-1]
    	\arrow["{\U \sliceWith{\sigma_\W}}", from=1-2, to=2-2]
    	\arrow["{\widetilde \U \sliceFunctor}"', from=2-1, to=2-2]
    \end{tikzcd}
    \]
    Consider objects $h_i \colon U_i \to A_i $ and $k_i \colon V_i \to A_i$ in $\U \sliceWith{A_i}$, along with arrows $\varphi_i \colon h_i \to k_i$. Naturality follows from functorialty of $\otimes$.
    \[
        \begin{tikzcd}[cramped, row sep = tiny, column sep = 0em]
        	{U_1 \otimes U_2} && {V_1 \otimes V_2} \\
        	& {B_1 \otimes B_2}
        	\arrow["{\varphi_1 \otimes \varphi_2}", from=1-1, to=1-3]
        	\arrow["{(h_1 \otimes h_2) \then (f_1 \otimes f_2)}"', from=1-1, to=2-2]
        	\arrow["{(k_1 \otimes k_2) \then (f_1 \otimes f_2)}", from=1-3, to=2-2]
        \end{tikzcd}
    \quad = \quad
    \begin{tikzcd}[cramped, row sep = tiny, column sep = 0em]
    	{U_1 \otimes U_2} && {V_1 \otimes V_2} \\
    	& {B_1 \otimes B_2}
    	\arrow["{\varphi_1 \otimes \varphi_2}", from=1-1, to=1-3]
    	\arrow["{(h_1 \then f_1) \otimes (h_2 \then f_2)}"', from=1-1, to=2-2]
    	\arrow["{(k_1 \then f_1) \otimes (k_2 \then f_2)}", from=1-3, to=2-2]
    \end{tikzcd}
    \]
    The associativity and unitality conditions follow since $\U$ is strict monoidal.
    The symmetry condition
    \[
    \begin{tikzcd}[cramped, row sep = tiny, column sep = 0em]
    	{U_1 \otimes U_2} && {V_1 \otimes V_2} \\
    	& {A_2 \otimes A_1}
    	\arrow["{\varphi_1 \otimes \varphi_2}", from=1-1, to=1-3]
    	\arrow["{(h_1 \otimes h_2) \then \sigma_{A_1,A_2}}"', from=1-1, to=2-2]
    	\arrow["{(k_1 \otimes k_2) \then \sigma_{A_1,A_2}}", from=1-3, to=2-2]
    \end{tikzcd}
    \quad = \quad
    \begin{tikzcd}[cramped, row sep = tiny, column sep = 0em]
    	{U_2 \otimes U_1} && {V_2 \otimes V_1} \\
    	& {A_2 \otimes A_1}
    	\arrow["{\varphi_2 \otimes \varphi_1}", from=1-1, to=1-3]
    	\arrow["{h_2 \otimes h_1}"', from=1-1, to=2-2]
    	\arrow["{k_2 \otimes k_1}", from=1-3, to=2-2]
    \end{tikzcd}
    \]
    holds up to natural 2-cells obtained by joining the two sides of the equation by $\sigma_{U_1,U_2} \colon U_1 \otimes U_2 \to U_2 \otimes U_1$ and $\sigma_{V_1,V_2} \colon V_1 \otimes V_2 \to V_2 \otimes V_1$ to form a commutative parallelogram.
\end{proof}

\begin{remark}
    If $\U$ is not strict, the associativity and unitality diagrams for $\U \sliceFunctor$ also only commute up to a natural invertible 2-cell. A weaker version of change-of-base still works in this setting (see \cite[Ch.~13]{garnerEnriched2015}), but composition is no longer strictly unital and associative, yielding a bicategory.
\end{remark}

Although the partial slice is not symmetric on the nose, one can still transfer SMC structure along it, yielding a nearly strict symmetric monoidal 2-category (\cref{def:nearly-symmetric-monoidal-2cat}).

\begin{proposition} \label{prop:slice-monoidal-transfer}
    Let $(\C, \boxtimes, J)$ be a (symmetric) monoidal $\W$-category and $i \colon \U \hookrightarrow \W$ a full strict monoidal subcategory. Then $\U \sliceFunctor_* \C$ is a nearly strict (symmetric) monoidal 2-category with tensorators given by symmetries of $\W$, and coherence 1-cells given by those of $\C$. Unlike the associator and unitors, the symmetry is only natural up to 2-cells $\sigma^\W_{U_2,U_1}$. The inclusion 2-functor $\iota \colon \C_0 \to \U \sliceFunctor_* \C$ strictly preserves monoidal products and coherence 1-cells.
\end{proposition}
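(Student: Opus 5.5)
The plan is to mimic the direct argument given for \cref{prop:change-of-base-monoidal-transfer}, replacing the strict symmetric monoidal functor $N$ by the partial slice functor $\U\sliceFunctor$.

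\textbf{The 2-category.} By the preceding proposition, $\U\sliceFunctor \colon \W \to \smcat$ is (lax) monoidal, with $\U\sliceFunctor_\epsilon$ picking out $\idUnitW$ and $\widetilde{\U\sliceFunctor}$ given by $\otimes$. Hence \cref{prop:change-of-base} already makes $\U\sliceFunctor_*\C$ a $\smcat$-category, i.e.\ a strict 2-category. Explicitly:
\begin{compactitem}
\item[(i)] 1-cells $X \to Y$ are pairs $(U, f \colon U \to \C(X,Y))$;
\item[(ii)] horizontal composition is $(U,f),(V,g) \mapsto (U\otimes V, (f\otimes g)\then \then_\C)$;
\item[(iii)] 2-cells are reparametrizations $\varphi\colon U\to V$ with $f=\varphi\then g$.
\end{compactitem}

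\textbf{The product.} Define $\boxtimes^{\U}$ on 1-cells by $(U_1,f_1),(U_2,f_2)\mapsto (U_1\otimes U_2,(f_1\otimes f_2)\then\boxtimes)$ and on 2-cells by $\varphi_1\otimes\varphi_2$. This is the composite $\widetilde{\U\sliceFunctor}_* \then \U\sliceFunctor_*\boxtimes$, mirroring the proof of \cref{prop:change-of-base-monoidal-transfer}. Here, however, $\widetilde{\U\sliceFunctor}$ is only a \emph{monoidal} transformation once we account for the symmetry. So $\boxtimes^\U$ will only be a pseudofunctor, and I will check this directly.

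\textbf{The tensorators.} Compute both sides of interchange:
\begin{compactitem}
\item $(f_1\boxtimes f_2)\then(g_1\boxtimes g_2)$ is parametrized by $(U_1\otimes U_2)\otimes(V_1\otimes V_2)$;
\item $(f_1\then g_1)\boxtimes(f_2\then g_2)$ is parametrized by $(U_1\otimes V_1)\otimes(U_2\otimes V_2)$.
\end{compactitem}
The underlying maps into $\C(X_1\boxtimes X_2, Z_1\boxtimes Z_2)$ differ exactly by precomposition with the middle-four interchange $m$ of \cref{def:monoidal-prod-Vcats}. This uses $\W$-functoriality of $\boxtimes$, whose composition in $\C\otimes\C$ involves $m$.

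Since $\U$ is a full strict monoidal subcategory of the SMC $\W$, the map $m$ is built from symmetries and lies in $\U$. Therefore $\vartheta:=m$ is an invertible 2-cell. Naturality of $\vartheta$ in 2-cells follows from naturality of $\sigma$. The pseudofunctor coherence axioms reduce to equalities of composites of symmetries in a strict SMC, which hold by Mac Lane coherence. Preservation of identities is strict, because the unit parameter $\unitW$ is strict.

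\textbf{Coherence cells and the inclusion.} Take the coherence 1-cells to be $\iota(a),\iota(r),\iota(l),\iota(s)$, i.e.\ parametrized by $\unitW$. As in \cref{prop:change-of-base-monoidal-transfer}, the inclusion $\iota\colon\C_0\to\U\sliceFunctor_*\C$ sends $f\colon I\to\C(A,B)$ to $(\unitW,f)$. It preserves composition and $\boxtimes$ strictly, since $\unitW\otimes\unitW=\unitW$ holds strictly. The same diagram chase as before, which did not need functoriality of $\boxtimes$, then shows that the pentagon, triangle, hexagon and $s\then s=\id$ hold in the image.

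The remaining step is naturality of $a,r,l,s$ in 1-cells. For $a,r,l$, naturality in $\C$ transported along $\U\sliceFunctor$ holds on the nose, using strict associativity and unitality of $\otimes$ in $\U$. For $s$, the two sides are parametrized by $U_1\otimes U_2$ and $U_2\otimes U_1$ respectively. They agree only after reparametrizing by $\sigma_{U_2,U_1}$, which gives the claimed strong 2-natural transformation.

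\textbf{Main obstacle.} I expect the hardest step to be verifying the pseudofunctor and strong-naturality coherence axioms for $\vartheta$ and the symmetry 2-cells. My plan is to reduce every required equation of 2-cells to an equation between permutation maps in the strict symmetric monoidal $\U$, and then invoke coherence for SMCs. This works because all 2-cells involved are symmetries and the underlying 1-cell equations hold strictly.
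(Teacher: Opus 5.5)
Your proposal is correct and follows essentially the same route as the paper. You obtain the product as the pseudofunctor $\widetilde{\U\sliceFunctor}_* \then \U\sliceFunctor_*\boxtimes$, with tensorator the rearrangement $m$ (a $\U$-arrow by fullness), and you take the coherence 1-cells to be images under $\iota$, so that $a,r,l$ are strictly natural while $s$ is natural only up to $\sigma_{U_2,U_1}$; all coherence axioms are then inherited from $\C_0$ because $\iota$ strictly preserves composition and the monoidal product, which is exactly the paper's argument. Your account is more explicit than the paper's: it verifies the interchange computation and reduces the pseudofunctor and strong-naturality axioms to Mac Lane coherence for permutations in the strict SMC $\U$, where the paper only says ``one checks''.
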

\begin{proof}
    Denote $\U \sliceFunctor$ by $N$. The definition of $\boxtimes^N$ from \cref{prop:change-of-base-monoidal-transfer} still makes sense. One checks that $(\widetilde N)_*$ defines a pseudofunctor, with tensorator 2-natural isomorphism 
    \[\vartheta_{f_1,f_2,g_1,g_2} \colon (f_1 \boxtimes^N f_2) \then (g_1 \boxtimes g_2) \Rightarrow (f_1 \then g_1) \boxtimes^N (f_2 \then g_2)\]
    given by the symmetry $m$ used in \cref{def:monoidal-prod-Vcats}. 
    The coherence maps $a,r$ and $l$ lift to strict 2-natural transformations by the same argument used in \cref{prop:change-of-base-monoidal-transfer}. One checks that the twisted symmetry defines a strong 2-natural transformation. 
    By definition, the composition and monoidal product in $N_*\C$ coincide with those of $\C_0$. Hence, the lifted coherence maps satisfy the axioms they do in $\C_0$, showing that $N_*\C$ is a nearly strict symmetric monoidal 2-category.
    It also follows that $\iota$ strictly preserves the monoidal product and coherence 1-cells. 
\end{proof}

\subsection{Main construction} \label{sec:main-construction}
Our main result combines uncertainty monads with parametrization derived from the partial-slice functor. We include an adapter functor $F \colon \V \to \W$, in case the monad does not operate directly on $\V$.

\begin{theorem} \label{thm:main-construction}
    Let $(\monadM,\monadMul,\monadUnit,\symMonadMor)$ be a symmetric monoidal monad on $\W$ and let $F \colon \V \to \W$ be a symmetric monoidal functor. Given a full strict monoidal subcategory $i \colon \U \hookrightarrow \W$, we write
    \[
        \ParaU{\U}{\monadM}{F} \colon \V \overset{F}{\longrightarrow} \W \overset{\leftFunctor_\monadM}{\longrightarrow} \Kl_\monadM \overset{\U \sliceFunctor}{\longrightarrow} \smcat.
    \]
    For any $\V$-category $\C$, we obtain of a 2-category $\ParaU{\U}{\monadM}{F}_*\C$ that adds \emph{parametric $\monadM$-uncertainty} to $\C$ with parameter spaces in $\U$. If $\C$ is (symmetric) monoidal, then $\ParaU{\U}{\monadM}{F}_*\C$ is a nearly strict (symmetric) monoidal 2-category.
    Unpacking the change-of-base yields the following explicit definition:
    \begin{compactitem}
        \item[(i)] The 0-cells are $\C$-objects.
        \item[(ii)] The 1-cells between $X$ and $Y$ are $\Kl_\monadM$-arrows $f \colon U_1 \to F\C(X,Y)$, where $U_1$ is an object in $\U$,
        \item[(iii)] The 2-cells between $f \colon U_1 \to F\C(X,Y)$ and $g \colon V_1 \to F\C(X,Y)$ are $\Kl_\monadM$-arrows $\varphi \colon U_1 \to V_1$ satisfying $f = \varphi \then g$. They compose vertically by $\Kl_\monadM$-composition.
        \item[(iv)] 1- and 2-cells compose horizontally according to
        \begin{equation} \label{eq:horizontal-comp-param}
            \begin{tikzcd}[cramped, sep = scriptsize, column sep = tiny]
            	{U_1} & {V_1} \\
            	& {F\C(X,Y)}
            	\arrow["{\varphi_1}", from=1-1, to=1-2]
            	\arrow["{f_1}"', from=1-1, to=2-2]
            	\arrow["{g_1}", from=1-2, to=2-2]
            \end{tikzcd}
            \:,\:
            \begin{tikzcd}[cramped, sep = scriptsize, column sep = tiny]
            	{U_2} & {V_2} \\
            	& {F\C(Y,Z)}
            	\arrow["{\varphi_2}", from=1-1, to=1-2]
            	\arrow["{f_2}"', from=1-1, to=2-2]
            	\arrow["{g_2}", from=1-2, to=2-2]
            \end{tikzcd}
            \quad \mapsto \quad
            \begin{tikzcd}[cramped, sep = scriptsize]
            	{U_1 \otimes U_2} & {V_1 \otimes V_2} \\
            	& {F\C(X,Y) \otimes F\C(Y,Z)} & {F\C(X, Z)}
            	\arrow["{\varphi_1 \otimes \varphi_2}", from=1-1, to=1-2]
            	\arrow["{f_1 \otimes f_2}"', from=1-1, to=2-2]
            	\arrow["{g_1 \otimes g_2}", from=1-2, to=2-2]
            	\arrow["{(*)}", from=2-2, to=2-3]
            \end{tikzcd}
        \end{equation}
        where $(*) = \widetilde F \then F(\then_\C) \then \monadUnit$ denotes the lifted composition of $\C$.
        The identities are given by 1-cells $\id_X := F_\epsilon \then F(\id_X) \then \monadUnit \colon I_\W \to F\C(X,X)$.
        \item[(v)] The monoidal product of 1- and 2-cells is given by a diagram similar to \eqref{eq:horizontal-comp-param}, with $(*) = \widetilde F \then F(\boxtimes_\C) \then \monadUnit$ being the lifted monoidal product of $\C$. 
        It has $I_\C$ as unit. 
        Interchange only hold up to natural invertible 2-cells $\vartheta_{f_1,f_2,g_1,g_2} \colon (f_1 \boxtimes f_2) \then (g_1 \boxtimes g_2) \Rightarrow (f_1 \then g_1) \boxtimes (f_2 \then g_2)$ called tensorators, given by the map $m$ from \cref{def:monoidal-prod-Vcats} that rearranges the parameter spaces. 
        The coherence maps are lifted from those in $\C$. For example, the left unitor has components $l_A = F_\epsilon \then F(l^\C_{A}) \then \monadUnit \colon I_\W \to F\C(I_\C \boxtimes X, X)$.
        The symmetry is only natural up to a 2-cell that swaps the parameter spaces.
    \end{compactitem}
\end{theorem}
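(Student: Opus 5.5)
The plan is to view $\ParaU{\U}{\monadM}{F}$ as a composite of three change-of-base steps and apply the earlier results stage by stage. Since change-of-base is 2-functorial (\cref{prop:change-of-base-2-functorial}), it sends composites of monoidal functors to composites of 2-functors. Hence $\ParaU{\U}{\monadM}{F}_*\C = (\U\sliceFunctor)_*\,(\leftFunctor_\monadM)_*\,F_*\C$. Monoidality of the composite $F \then \leftFunctor_\monadM \then \U\sliceFunctor$ is immediate: each factor is monoidal, $F$ by hypothesis, $\leftFunctor_\monadM$ by \cref{prop:monad-markov-category}, and $\U\sliceFunctor$ by the partial slice proposition. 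By \cref{prop:change-of-base}, $\ParaU{\U}{\monadM}{F}_*\C$ is therefore a $\smcat$-category, i.e.\ a 2-category. Here we use the standing assumption that $\Kl_\monadM$, and with it the full subcategory $\U$, is strictified.

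For the monoidal structure I would proceed in two steps. First, $F$ and $\leftFunctor_\monadM$ are symmetric monoidal, the latter even strict by \cref{prop:monad-markov-category}. Applying \cref{prop:change-of-base-monoidal-transfer} twice shows that $(F\then\leftFunctor_\monadM)_*\C$ is a (symmetric) monoidal $\Kl_\monadM$-category, with product $\widetilde F \then F(\boxtimes_\C)\then\monadUnit$ and coherence maps $F_\epsilon\then F(a^\C)\then\monadUnit$, and similarly for the others. Second, I would apply \cref{prop:slice-monoidal-transfer} with $\W := \Kl_\monadM$ to this $\Kl_\monadM$-category. This yields a nearly strict (symmetric) monoidal 2-category. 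Its tensorators are given by the rearrangement $m$, and its symmetry is natural up to the parameter swap.

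The explicit description (i)–(v) is then obtained by unpacking definitions. The hom-category between $X$ and $Y$ is $\U\sliceWith{F\C(X,Y)}$ taken in $\Kl_\monadM$, which gives (ii) and (iii). Horizontal composition is $\widetilde{\U\sliceFunctor}$ followed by $\U\sliceWith{(*)}$, where $(*)$ is the composition of $(F\then\leftFunctor_\monadM)_*\C$. The latter equals $\leftFunctor_\monadM(\widetilde F \then F\then_\C)$, because $\leftFunctor_\monadM$ is strict, so $\widetilde{\leftFunctor_\monadM}$ is an identity. Tensoring the two triangles then gives \eqref{eq:horizontal-comp-param}. The identities come from composing the three unit maps: $\id_{I_\W}$ as a slice object, pushed along $\leftFunctor_\monadM(F_\epsilon\then F\id_X)$.

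The main obstacle I expect is to justify that the two-stage transfer agrees with a single change-of-base along the composite, and that the ``nearly strict'' structure survives. Concretely, the composite $F\then\leftFunctor_\monadM\then\U\sliceFunctor$ is monoidal but only symmetric up to 2-cells, so \cref{prop:change-of-base-monoidal-transfer} cannot be applied to it directly. I would handle this by applying \cref{prop:slice-monoidal-transfer} last, which is valid because its hypotheses only require a symmetric monoidal $\Kl_\monadM$-category. I would then check that the tensorator $m$ and the swap 2-cells $\sigma_{U_1,U_2}$ in the resulting structure are exactly the parameter rearrangements claimed in (v).
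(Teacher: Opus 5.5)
Your proposal is correct and follows the paper's proof. You first transfer the (symmetric) monoidal structure along $F \then \leftFunctor_\monadM$ via \cref{prop:change-of-base,prop:change-of-base-monoidal-transfer} (doing it in two stages, which agrees with the paper's single step by 2-functoriality), and then apply \cref{prop:slice-monoidal-transfer} to the resulting $\Kl_\monadM$-category, assuming $\Kl_\monadM$ is strict, with your unpacking of composition, identities, tensorators and the twisted symmetry matching items (i)--(v).
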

\begin{proof}
    Use \cref{prop:change-of-base,prop:change-of-base-monoidal-transfer} on the symmetric monoidal functor $F \then \leftFunctor_\monadM$. Then apply \cref{prop:slice-monoidal-transfer} to the (symmetric) monoidal $\Kl_\monadM$-category $(F \then \leftFunctor_\monadM)_*\C$ while assuming that $\Kl_\monadM$ is strict.
\end{proof}
Next, we show that $\ParaU{\U}{\monadM}{F}_*\C$ extends $\C$, allowing us to transfer additional structure.

\begin{proposition}\label{prop:change-of-base-inclusion}
    There is an identity-on-objects 2-functor $\iota \colon \C_0 \to \ParaU{\U}{\monadM}{F}_*\C$ that strictly preserves monoidal products and coherence maps. If $F_*^0$ and post-composition by $\monadUnit$ are faithful, then so is $\iota$.
\end{proposition}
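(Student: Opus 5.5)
The plan is to build $\iota$ as a composite of two inclusions already available, and then read off the properties from the earlier propositions. Write $N := F \then \leftFunctor_\monadM \colon \V \to \Kl_\monadM$. This is symmetric monoidal, being a composite of the symmetric monoidal $F$ with the strict symmetric monoidal $\leftFunctor_\monadM$ of \cref{prop:monad-markov-category}. By \cref{prop:change-of-base-monoidal-transfer}, $N$ induces a strict (symmetric) monoidal, identity-on-objects functor $N_*^0 \colon \C_0 \to (N_*\C)_0$. It sends $f \colon I_\V \to \C(A,B)$ to $N_\epsilon \then Nf = F_\epsilon \then Ff \then \monadUnit$, and it sends the coherence maps $a,r,l,s$ to $N_*a, N_*r, N_*l, N_*s$. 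Next, \cref{prop:slice-monoidal-transfer}, applied to the monoidal $\Kl_\monadM$-category $N_*\C$, gives an inclusion 2-functor $(N_*\C)_0 \to \U\sliceFunctor_*(N_*\C) = \ParaU{\U}{\monadM}{F}_*\C$. Regarding $(N_*\C)_0$ as a locally discrete 2-category, this inclusion sends an arrow $g \colon I_\W \to N\C(A,B)$ to the 1-cell $(I_\W, g)$. It strictly preserves monoidal products and coherence 1-cells. Define $\iota$ as the composite of these two inclusions.

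Next I verify the properties of $\iota$. It is identity-on-objects because both factors are. It is a 2-functor because $\C_0$ has only identity 2-cells, and each factor preserves composition and identities strictly. For the first factor this holds by functoriality of $N_*^0$. For the second factor, horizontal composition of $(I_\W,g_1)$ and $(I_\W,g_2)$ has parameter $I_\W \otimes I_\W = I_\W$, since $\U$ is strict, and the composite arrow is exactly the $(N_*\C)_0$-composite. Preservation of monoidal products and coherence maps is then the composite of the two strictness statements. As a sanity check, I will confirm that the explicit formulas agree with \cref{thm:main-construction}(iv)--(v), e.g.\ $l_A = F_\epsilon \then F(l^\C_A) \then \monadUnit$. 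The one point needing care is the unit constraint $I_\W \cong I_\W \otimes I_\W$: strictness of $\U$ (hence of the relevant part of $\Kl_\monadM$, after strictification) lets us take these as identities, so no tensorator $\vartheta$ enters. Indeed $m$ restricted to unit parameters is the identity, so interchange holds strictly on the image.

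For faithfulness, suppose $\iota f = \iota f'$ for $f,f' \colon I_\V \to \C(A,B)$. As 1-cells these are pairs $(I_\W, F_\epsilon \then Ff \then \monadUnit)$, so equality gives $F_\epsilon \then Ff \then \monadUnit = F_\epsilon \then Ff' \then \monadUnit$. Faithfulness of post-composition by $\monadUnit$ yields $F_\epsilon \then Ff = F_\epsilon \then Ff'$, that is, $F_*^0 f = F_*^0 f'$. Faithfulness of $F_*^0$ then gives $f = f'$.

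I expect the main obstacle to be the bookkeeping in the second factor. One must confirm that the passage from $(N_*\C)_0$ into the 2-category really is strict with respect to both composition and $\boxtimes$, rather than merely pseudo. The argument rests on the fact that all parameter objects in the image are $I_\W$ and that $\U$ is strict monoidal, so every rearrangement $m$ and every symmetry $\sigma_{I_\W,I_\W}$ appearing in tensorators or symmetry 2-cells is an identity.
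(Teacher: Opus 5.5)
Your proposal is correct and takes the same route as the paper. You factor $\iota$ as $N_*^0$, with $N = F \then \leftFunctor_\monadM$, followed by the inclusion from \cref{prop:slice-monoidal-transfer}; you identify the image of $f$ as $F_\epsilon \then Ff \then \monadUnit$ and read off faithfulness and the form of the coherence maps from that formula. Your extra check that $m$ and $\sigma_{I_\W,I_\W}$ are identities on unit parameters, so that no tensorator intervenes, makes explicit a strictness detail the paper leaves implicit.
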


\begin{proof}
   By \cref{prop:change-of-base-monoidal-transfer}, change-of-base induces a strict symmetric monoidal functor $(\monadM F)_*^0 : \C_0 \to ((\monadM F)_*\C)_0$. 
   \Cref{prop:slice-monoidal-transfer} yields a 2-functor $((\monadM F)_*\C)_0 \to \ParaU{\U}{\monadM}{F}$ that strictly preserves monoidal products.
   The composite maps $f \colon I_\V \to \C(X,Y)$ to $\delta(f) \colon I_\W \overset{F_\epsilon}{\longrightarrow} FI_\V \overset{Ff}{\longrightarrow} F\C(X,Y) \overset{\monadUnit}{\longrightarrow} \monadM F \C(X,Y)$, proving the claim about faithfulness. Moreover, the coherence maps are precisely of this form.
\end{proof}

\begin{remark}
    By \cref{prop:change-of-base-inclusion}, we can transfer any structure on $\C_0$ that is defined in terms of arrows satisfying equations involving composition, monoidal products, and coherence maps to $\ParaU{\U}{\monadM}{F}_*\C$. This includes compact closed structure, (commutative) (co)monoids, and the supplies defined in \cite{fongSupplying2020}, which transfer along essentially surjective, strict symmetric monoidal functors \cite[Prop.~3.24]{fongSupplying2020}.
\end{remark}

We now show how to use \Cref{thm:main-construction} to obtain useful 2-categories.

\begin{example}[External parameterization]\label{ex:external-parametrization}
    Given a symmetric monoidal $F \colon \V \to \W$ and full monoidal subcategory $\U \hookrightarrow \W$, we can parametrize a $\V$-category $\C$ with $\U$-arrows $A \to \C(X,Y)$ using $\ParaU{\U}{}{F}_* \C$, where we have suppressed the identity monad in the notation. We will similarly omit $F$ when $F = \Id$.
\end{example}

\begin{remark}
    A common way to introduce parametric maps is the $\Para$ construction \cite{fongBackpropFunctorCompositional2019, capucciTowards2022}. In the simplest formulation, 1-cells in $\Para(\C, \boxtimes, J)$ are maps $A \boxtimes X \to Y$ in $\C$, where $A$ is thought of as a parameter space. Hence, $\Para$ produces parametric maps internal to $\C$. When $\C$ is closed, these are equivalent to maps $A \to [X,Y]$, where $[X,Y]$ denotes the internal hom. 
    For our applications, the external representation is more useful.
    In particular, it decouples the parametrization issue from properties of $\C$. 
\end{remark}

\begin{example}[Possibilistic uncertainty]
    If $\C$ is an ordinary category, $\ParaU{\U}{\PowNoEmpty}{}_* \C$ yields parametrized subsets of arrows. If $\C$ is $\Pos$-category $\C$, $\ParaU{\U}{\Arr}{}_* \C$ contains parametrized intervals of arrows. 
\end{example}

\begin{example}[Probabilistic uncertainty]
    To use the distribution monad to endow a category with probabilistic uncertainty, we need to view it as $\Meas$-enriched. Crucially, our assignments must retain measurability of the composition and monoidal product maps. If $\C$ is an ordinary category, one can assign each set $\C(X,Y)$ the discrete $\sigma$-algebra $\powerset(\C(X,Y))$. This works well for finite and countable sets. For uncountable sets like $\Reals$, however, the discrete $\sigma$-algebras is often too large to be useful. How to proceed will depend on the application. Sometimes, it may be possible to directly construct a $\Meas$-enriched version of $\C$ with desirable $\sigma$-algebras by hand. In other cases, it might be possible to do so uniformly by constructing a symmetric monoidal functor $F: \V \to \Meas$. 
\end{example}

\begin{example} \label{ex:meas-dp}
    For $\DP$, it seems difficult to equip the posets $\DP(\posetP,\posetQ)$ with non-trivial $\sigma$-algebras in a way that makes composition measurable. For example, the natural choice of assigning $\DP(\posetP,\posetQ)$ the $\sigma$-algebra generated by its upper sets fails due to the presence of posets like $\Reals^2$ with uncountable width.

    We can bypass measurability issues by assigning $\DP(\posetP,\posetQ)$ the discrete $\sigma$-algebra. This type of discretization occurs in practice when distributions are represented by samples. To define distributions on the resulting space, we can push forward distributions from a more familiar space $A$ along a measurable map $f \colon A \to \DP(\posetP,\posetQ)$. Such maps may be obtained from an arbitrary function $g \colon A \to \DP(\posetP,\posetQ)$ by the following discretization procedure: Fix a partition $P := \{p_j\}_{j=1}^N$ of $A$ into finitely many measurable sets. We can approximate $g$ by $g_P \colon P \to \DP(\posetP,\posetQ)$, where $g_P(p_j) := g(x_j)$ for some $x_j \in p_j$. This is measurable for the discrete $\sigma$-algebra on $P$. Moreover, there is a measurable map $i \colon A \to P$ that assigns each point to the set containing it. Hence, the composition $f := i \then g_P \colon A \to \DP(\posetP,\posetQ)$ is a measurable approximation of $g$ whose fidelity increases as $P$ gets finer. Such $f$ can also be used to define Markov kernels $B \to \DP(\posetP,\posetQ)$ by composing with kernels $k \colon B \to A$. We are currently investigating how to view continuous distributions on $\DP(\posetP,\posetQ)$ in a way that is compatible with composition.
\end{example}

\begin{example}[Learning reaction networks]
    Decorated and structured cospans \cite{fongAlgebra2016, baezStructured2020} provide a systematic method for constructing hypergraph categories \cite{fongHypergraph2019} of open systems. The framework has been widely applied, for example to open chemical reaction networks \cite{baezCompositional2017}. These can be thought of as structured cospans $\Csp(\Petri)$ in a category $\Petri$ of Petri nets \cite[Section 6]{baezStructured2020}. For finite Petri nets, we can view the resulting hypergraph category as being $\FinSet$-enriched. We construct the symmetric monoidal 2-category $\ParaU{\U}{\Dist}{\Sigma}_* \Csp(\Petri)$, where $\Sigma \colon \FinSet \to \Meas$ sends each set to the discrete measurable space. The 1-cells of this category are Markov kernels $A \to \Csp(\Petri)(X,Y)$ that send each $a \in A$ to a discrete distribution of open reaction networks with interfaces $X,Y$. The hypergraph structure on $\Csp(\Petri)$ transfers along the inclusion. Hence $\ParaU{\U}{\Dist}{\Sigma}_* \Csp(\Petri)$ can represent hypotheses about open networks which can be updated through Bayesian inference on experimental data. Moreover, it incorporates the compositional structure of the network, enabling causal reasoning about experimental interventions.
\end{example}

\section{Application to co-design}\label{sec:learning-dps}
In this section, we demonstrate the utility of parametric uncertainty by applying it to the category $\DP$. 
A computational case study based on these ideas is presented in \cite{huangComposable2025}.
We adopt the co-design diagrams from \cite{zardiniCoDesignComplexSystems2023}, using the system in \cref{fig:dp-chassis-battery} as our running example.
\subsection{Representing parameterized and uncertain design problems}
As explained in \cref{ex:DP-pos-enriched} and \cref{ex:DP-pos-enriched-smc}, we can view the compact closed SMC $\DP$ as being $\Pos$-enriched, in which case we denote it $\PosDP$. By applying the change-of-base construction, we obtain parametrized design problems with chosen uncertainty semantics. Since change-of-base is functorial, traditional co-design concepts can be lifted to the new setting.

\begin{example}\label{ex:dp-param-det}
    The 2-category $\ParaU{\U}{\Id}{}_* \DP$ represents \emph{design problems parameterized by functions}, whose 1-cells between $\posetP$ and $\posetQ$ are functions $A \to \DP(\posetP, \posetQ)$. Similarly, $\ParaU{\U}{\Id}{}_* \PosDP$ models \emph{monotonically parameterized design problems}, where the parameter spaces are ordered and the functions monotone.
\end{example}

\begin{example}
    The 2-category $\ParaU{\U}{}{\PowNoEmpty}_* \DP$ models \emph{robust design problems} parametrized by functions.
    Its 1-cells between $\posetP$ and $\posetQ$ are functions $A \to \PowNoEmpty \DP(\posetP, \posetQ)$, that assign each parameter $a \in A$ a nonempty subset of design problems from $\posetP$ to $\posetQ$. The analogous construction $\ParaU{\U}{}{\Arr}_* \PosDP$ yields parametrized \emph{intervals} of design problems.
\end{example}

\begin{example}
    Using the distribution monad $\Dist$ and the functor $\Sigma \colon \Set \to \Meas$ that assigns each set the discrete measurable space, we obtain a 2-category $\ParaU{\U}{\Dist}{\Sigma}_* \DP$ of \emph{parametrized distributions} of design problems. Its  1-cells Markov kernels $A \to \DP(\posetP, \posetQ)$, where the $\DP(\posetP, \posetQ)$ carries the discrete $\sigma$-algebra. Such kernels can be constructed by the procedure described in \cref{ex:meas-dp}.
\end{example}
We now demonstrate the use of these categories in a concrete example.

\begin{example}[Electric vehicle]\label{ex:ev-chassis-battery}
The design problem in \cref{fig:dp-chassis-battery} models an electric vehicle with of two components: chassis and battery.
    \begin{figure}[tb]
        \centering
        \includegraphics[width=0.65\linewidth]{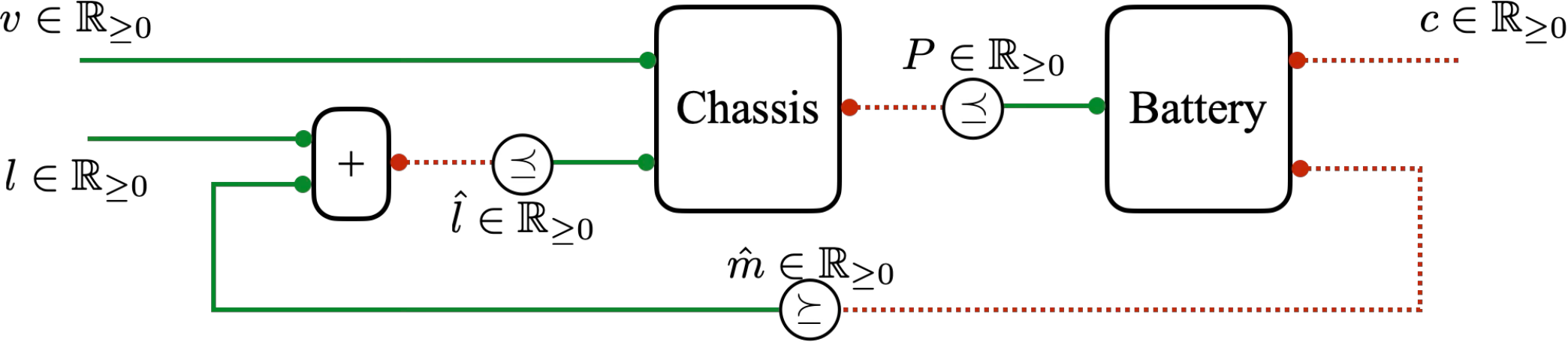}
        \caption{Design problem for an electric vehicle composed of chassis and battery components.}
        \label{fig:dp-chassis-battery}
    \end{figure}
    Blocks represent arrows in $\DP$, while wires are objects denoting resources (red) and functionalities (green).
    The battery $\BatteryDP$ provides \funsty{power} $P$ in return for \ressty{cost} $c$ and \ressty{battery mass} $\hat m$. 
    The chassis $\ChassisDP$ provides \funsty{velocity} $v$ and \funsty{total load} $\hat l$, while requiring \ressty{power} $P$.
    The functionality $l$ is the extra payload provided by the design.
    We adopt the usual order $\leq$ for $\Reals$, and write $\Reals^n$ for the $n$-fold product of posets.
    By the graphical language for \emph{compact closed SMCs}, the diagram specifies an arrow in $\DP$ representing the composite design problem.
    In practice, the design problems for the chassis and battery may be \emph{parameterized}.
    For instance, the chassis $\ChassisDP$ might be given by:
    \begin{equation*}
        \ChassisDP((v,\hat l),P;\paramTheta) = \True \iff \monfunPhi(v, \hat l;\paramTheta) \leq P,
    \end{equation*}
    where $\monfunPhi$ is a monotone function in $v$ and $\hat l$, with parameter $\paramTheta$. 
    Such dependencies can be captured in $\ParaU{\U}{\Id}{}_* \DP$. Here, $\paramTheta$ can model factors we have no control over, but need to be taken into account. Alternatively, we might introduce $\paramTheta$ to assess how \emph{sensitive} the solution of the design problem is to problem specification. Finally, we could use $\paramTheta$ to fit the design problem to empirical data.

    Uncertainty in component performance can be modeled by leveraging uncertainty monads.
    For example, suppose we are choosing from battery models $B_M$, where each model provides upper and lower bounds on performance.
    This can be formalized as a 1-cell $\BatteryDP \colon B_M \to \Arr \DP(\Reals, \Reals^2)$ in $\ParaU{\U}{\Arr}{}_*\PosDP$. 
    Similarly, we may have bounds $\ChassisDP \colon C_M \to  \Arr \DP(\Reals^2, \Reals)$ on the performance of chassis in $C_M$. The composite in $\ParaU{\U}{\Arr}{}_*\PosDP$ combines these bounds by composing the worst and best case bounds for each pair in $B_M \times C_M$. 
    If we have more detailed knowledge about component performance, we may instead use distributions to represent it. Distributions are also useful for capturing random influences on performance.
\end{example}
We can extend the graphical language for design problems to the parametrized case, as is often done for the $\Para$ construction. 
\Cref{fig:chassis-battery-parameterized} extends the electric vehicle. The 1-cells are shown as rounded boxes with incoming wires representing the parametric dependence. The sharp rectangles depict reparametrization 2-cells given by arrows in $\Kl_\monadM$. Since the 1-cells inhabit a nearly strict symmetric monoidal 2-category, the diagram has an unambiguous interpretation up to unique permutations of the parameter spaces.

\begin{figure}
    \centering
    \includegraphics[width=0.65\linewidth]{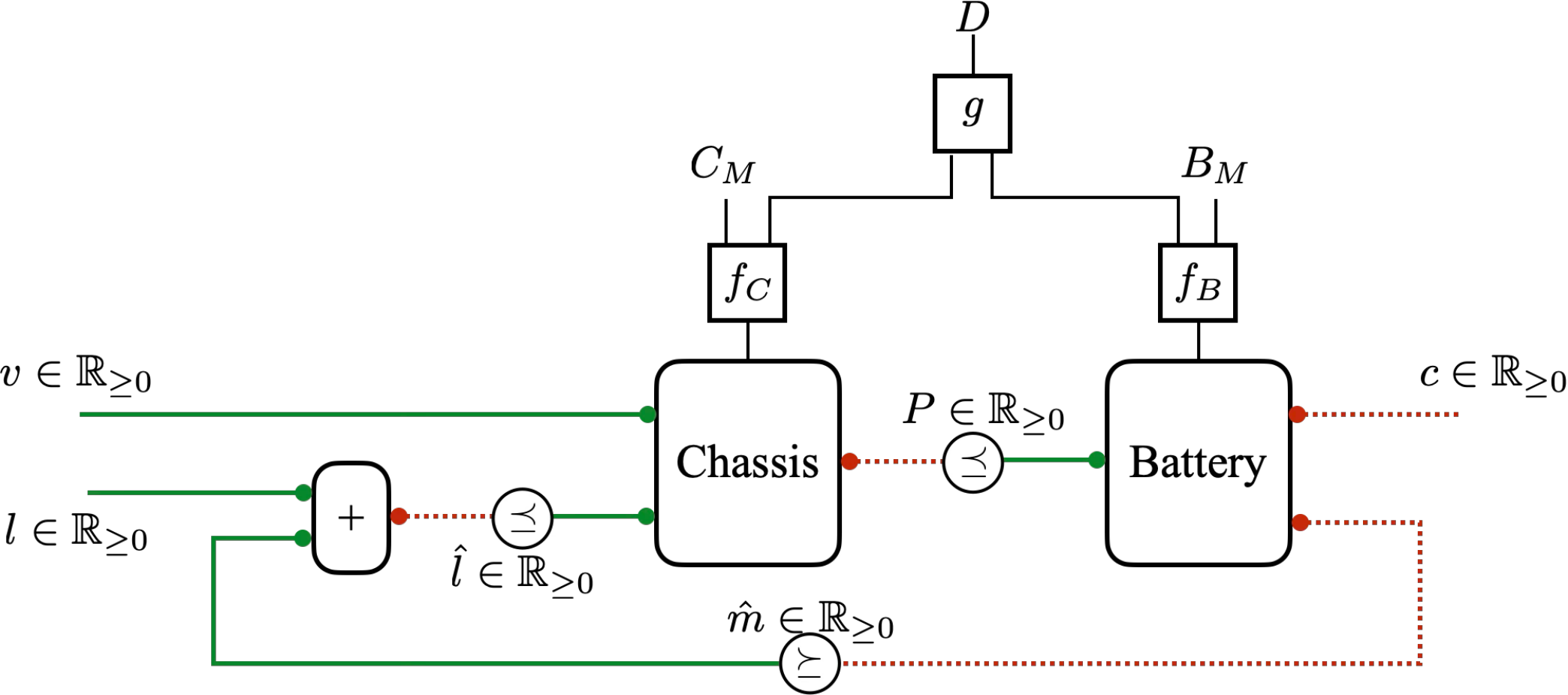}
    \caption{Parameterized version of the electric vehicle design problem. The rounded rectangles represent 1-cells $A \to \monadM F \DP(\posetP, \posetQ)$, while the rectangles represent reparameterization 2-cells given by arrows in $\Kl_\monadM$.}
    \label{fig:chassis-battery-parameterized}
\end{figure}

\begin{example}
    \Cref{fig:chassis-battery-parameterized} shows a reparameterization of the electric vehicle problem by a 2-cell composed of $f_C, f_B$, and $g$. This introduces dependencies between the components. For probabilistic uncertainty, this allows us to graphically model complex random processes that influence the design.
\end{example}

\subsection{Making decisions}

In co-design, we \emph{query} problems to inform decisions.
For instance, the query \FixFunMinRes{} maps a design problem $\designProblem \in \DP(\posetF, \posetR)$ and a functionality $f \in \posetF$ to the set of minimal resources which make $f$ feasible.
Parametrized design problems unlock additional queries, where we aim to find decision parameters that maximize some utility function on designs.

\begin{example}\label{ex:chassis-battery-decision}
    Viewing \cref{fig:chassis-battery-parameterized} as a diagram of deterministically parameterized design problems yields a map $f \colon C_M \otimes D \otimes B_M \to \DP(\PosReals^2, \PosReals)$. We think of the domain as a space of decision variables. Suppose we require our design to yield a fixed velocity and load $(v,l)$. For each possible decision, the \FixFunMinRes{} query $q$ returns the minimal cost $c$ required to satisfy our design constraints. Hence, the optimal decision is one minimizing $f \then q \colon C_M \otimes D \otimes B_M \to \PosReals$.
    In case there is probabilistic uncertainty about components, we can lift $q$ to $\Kl_\Dist$ and compose $f \then q$ as Markov kernels,  leveraging Bayesian decision theory. 
    For instance, we might minimize the expected cost, or use some risk-sensitive metric.
    The analogous method works for other types of uncertainty. For instance, in case of performance bounds, we might optimize the worst case scenario.
\end{example}

\subsection{Learning design problems}

In addition to making decisions based on known models, we often need to learn models from data. We illustrate how our construction can handle such cases by example.

\begin{example}[Optimization]
    Suppose the chassis is of the form $\monfunPhi(v,l;\paramTheta) \leq P$ for $\monfunPhi$ monotone in $(v,l)$, and that we have a dataset of observed feasible triples $y_i := (v_i,l_i,P_i)$. 
    If $y_i$ is noisy empirical data, a simple least-squares fit for $\paramTheta$ might be appropriate. However, the resulting $\hat \paramTheta$ will likely violate some of the $y_i$. Hence, if $y_i$ are guaranteed, for instance if they are the results of exact simulations, then we could choose $\hat \paramTheta \in \{\paramTheta :  \monfunPhi(v_i,l_i;\paramTheta) \leq P_i \}$ according to a metric that balances fit and complexity.
\end{example}

\begin{example}[Bayesian inference]
    Viewing \cref{fig:chassis-battery-parameterized} using probabilistic semantics returns a Markov kernel $k: C_M \otimes D \otimes B_M \to \DP(\PosReals^2, \PosReals)$. Here, $C_M$ and $B_M$ might represent design decisions, while $D$ is an unknown manufacturer-specific parameter affecting performance. Moreover, suppose we have observed a dataset of feasible triples $y_i := (v_i,l_i,c_i)$, each associated with input decisions $x_i := (c_{M,i},b_{M,i})$. Given a prior distribution $p(d) \colon I_\Meas \to D$, we can condition on the data to obtain a posterior distribution $\hat p(d \mid x_{1:n},y_{1:n}) \colon  I_\Meas \to D$. This process can be expressed diagrammatically in $\Stoch$ \cite{choDisintegration2019, fritzSynthetic2020}.
\end{example}

\begin{example}[Active learning]
     In practice, components are often implicitly defined through some expensive procedure, such as simulations or optimization problems.
     To approximate these efficiently, \emph{active learning} leverages a surrogate model.
     Starting from an initial surrogate, we select simulation settings, update the surrogate with the results, and repeat. 
     The surrogate helps identify settings likely to provide informative results.
     Any uncertainty semantics can be used, with more expressive options offering better control but higher computational cost.
     Our approach allows composing surrogates for individual components to predict information and update beliefs at the system level.
\end{example}

\bibliographystyle{eptcs}
\bibliography{generic}

\end{document}